\newcommand{\mres}{\mathbin{\vrule height 1.6ex depth 0pt width 0.13ex\vrule height 0.13ex depth 0pt width 0.8ex}}
\newcommand{\footremember}[2]{%
    \footnote{#2}
    \newcounter{#1}
    \setcounter{#1}{\value{footnote}}%
}
\newcommand{\footrecall}[1]{%
    \footnotemark[\value{#1}]%
}
\newcommand\blfootnote[1]{%
    \begingroup
    \renewcommand\thefootnote{}\footnote{#1}%
    \addtocounter{footnote}{-1}%
    \endgroup
}
\theoremstyle{plain}
\newtheorem{thm}{Theorem}[section]
\newtheorem{prop}[thm]{Proposition}
\newtheorem{lem}[thm]{Lemma}
\newtheorem*{thm*}{Theorem}
\newtheorem*{prop*}{Proposition}
\newtheorem*{cor*}{Corollary}
\newtheorem*{lem*}{Lemma}
\theoremstyle{definition}
\newtheorem{defi}[thm]{Definition}
\newtheorem*{defi*}{Definition}
\newtheorem*{exa*}{Example}
\theoremstyle{remark}
\newtheorem{rmk}[thm]{Remark}
\newtheorem*{rmk*}{Remark}
\newcommand{\R}{\mathbf{R}}
\newcommand{\HH}{\mathcal{H}}
\newcommand{\LL}{\mathcal{L}}
\newcommand{\spt}{\mathrm{spt}}
\newcommand{\dm}{\,\mathrm{d}}
\title{The calibration method for the thermal insulation functional}
\author{C. Labourie\footremember{ucy}{University of Cyprus, Department of Mathematics \& Statistics, P.O. Box 20537, Nicosia, CY- 1678 CYPRUS.}\footnote{{\small \tt labourie.camille@ucy.ac.cy}} \and E. Milakis\footrecall{ucy}\footnote{{\small \tt emilakis@ucy.ac.cy}}\blfootnote{This work was co-funded by the European Regional Development Fund and the Republic of Cyprus through the Research and Innovation Foundation (Project: EXCELLENCE/1216/0025)}
}
\date{}
\begin{document}

\maketitle

\begin{abstract}
    We provide minimality criteria by construction of calibrations for
    functionals arising in the theory of Thermal Insulation.
\end{abstract}

\textbf{AMS Subject Classifications}: 49K10, 35R35.

\textbf{Keywords}: Thermal Insulation, Calibration Method, Free Boundary Problems.

\tableofcontents


\section{Introduction}

\subsection{Calibrations for free-discontinuity problems}

Free-discontinuity problems consists in minimizing the energy $E(u,K)$ of a pair composed of a function $u \in C^1(\R^n \setminus K)$ and a set $K$ of dimension $n - 1$.
The energy presents a competition between the Dirichlet energy of $u$ in $\R^n \setminus K$ and the surface energy of $K$.
The set $K$ is interpreted as an hypersurface (with possibly singularities) where $u$ jumps between different values.
This notion of pair composed of a function and of its discontinuity set is alternatively formalized by the space $SBV$ (special functions with bounded variations).
The model case of this kind of problem is the Mumford-Shah functional coming from image segmentation.

These problems generally present two kind of Euler-Lagrange equations.
Considering small perturbations of $u$ (with the discontinuity set $K$ being fixed), one obtains that $u$ satisfy a PDE with a boundary condition on each connected component of $\R^n \setminus K$.
As an example, minimizers of the Mumford-Shah functional are harmonic functions satisfying a Neumann boundary condition.
Considering small perturbations of $K$ under diffeormophisms, one obtains an equation that deals with the mean curvature of $K$.
However, these equations do not entirely characterise the minimizers.
We also point out that the minimizer may not be unique.
We summarize these difficulties as a lack of convexity of the functional $E$.

In~\cite{Alberti}, Alberti, Bouchitt\'e, Dal Maso have introduced a sufficient condition for minimality by adapting the calibration method that was known for minimal hypersurfaces.
Here is a simplified summary.
Let us consider a competitor $(u,K)$.
We define the complete graph $\Gamma_u$ of $u$ as the boundary of the subgraph of $u$.
It is the reunion of the graph of $u$ on $\R^n \setminus K$ and of vertical sides above $K$ (the discontinuities of $u$).
We let $\nu_{\Gamma_u}$ denote the normal vector to $\Gamma_u$ pointing into the subgraph of $u$.
A calibration for $(u,K)$ is a divergence-free vector field $\phi \colon \R^n \times \R \to \R^n \times \R$ such that
\begin{equation}\label{eq_calibration_principle1}
    E(u,K) = \int_{\Gamma_u} \! \phi \cdot \nu_{\Gamma_u} \mathrm{d}\HH^{n-1}
\end{equation}
and for all competitor $(v,L)$,
\begin{equation}\label{eq_calibration_principle2}
    E(v,L) \geq \int_{\Gamma_v} \! \phi \cdot \nu_{\Gamma_v} \mathrm{d}\HH^{n-1}.
\end{equation}
One observes that the Gauss-Green theorem and the divergence-free property imply
\begin{equation}
    \int_{\Gamma_u} \! \phi \cdot \nu_{\Gamma_u} \mathrm{d}\HH^{n-1} = \int_{\Gamma_v} \! \phi \cdot \nu_{\Gamma_v} \mathrm{d}\HH^{n-1}
\end{equation}
so the existence of such a vector field proves that $(u,K)$ is a minimizer.

In~\cite[Lemma 3.7]{Alberti}, the authors provide four conditions to ensure the properties (\ref{eq_calibration_principle1}), (\ref{eq_calibration_principle2}) above and it is convenient to take these conditions as a definition of calibrations.
Minimality criterias for the Mumford-Shah functional are proved by construction of calibrations in~\cite{Alberti},~\cite{M1},~\cite{M2},~\cite{M3},~\cite{M4}.
The principle of calibrations also inspired a fast primal-dual algorithm to minimize the Mumford-Shah functional (\cite{C1},\cite{C2}).

In general, we don't know if calibrations exist for minimizers of this kind of problem.
This question is related to the non-existence of a duality gap (see~\cite[Section 3.13]{Alberti},~\cite{C1} and also~\cite{Bo}).
There is no general recipe to follow and the construction can be very difficult.
The crack-tip is famous example of minimizer of the Mumford-Shah functional for which a calibration has not been found.

\subsection{The thermal insulation functional}

A free boundary problem related to thermal insulation was recently studied by Caffarelli--Kriventsov (\cite{CK},~\cite{K}) and also Bucur--Luckhaus--Giacomini (\cite{BL},~\cite{BG}).
Relaxing the problem in $SBV$, it consists in minimizing
\begin{equation*}
    E(u) = \int \! \abs{\nabla u}^2 \dm x + \beta \int_{J_u} \! (u^-)^2 + (u^+)^2 \dm \HH^{n-1} + \gamma^2 \LL^n(\set{u > 0}),
\end{equation*}
where $\beta, \gamma > 0$ and the competitors are functions $u \in SBV(\R^n)$ such that $u = 1$ on a given bounded open set $\Omega \subset \R^n$ and $0 \leq u \leq 1$ on $\R^n$.
Here $J_u$ is the set of all jump points of $u$, that is the points $x$ for which there exist two real numbers $u^- < u^+$ and a (unique) vector $\nu_u(x) \in \mathbf{S}^{n-1}$ such that
\begin{subequations}
    \begin{align}
    & \lim_{r \to 0} \fint_{B_r \cap H^+} \! \abs{u(y) - u^-} \dm y = 0 \\
    & \lim_{r \to 0} \fint_{B_r \cap H^-} \! \abs{u(y) - u^+} \dm y = 0,
    \end{align}
\end{subequations}
where
\begin{subequations}
    \begin{align}
        H^+ & = \set{y \in \R^n | (y - x) \cdot \nu_u(x) > 0} \\
        H^- & = \set{y \in \R^n | (y - x) \cdot \nu_u(x) < 0}.
    \end{align}
\end{subequations}
The function $u$ can be interpreted as the temperature (which is fixed to $1$ on $\Omega$) and $J_u$ as an isolating layer which has no width and no thermal conductivity.


Caffarelli--Kriventsov and Bucur--Luckhaus--Giacomini have shown the existence of minimizers in~\cite{CK} and~\cite{BL},\cite{BG}.
They prove a non-degeneracy property: there exists $0 < \delta < 1$ (depending only on $n$ and $\Omega$) such that such that $\spt(u) \subset B(0,\delta^{-1})$ and
\begin{equation}
    u \in \set{0} \cup [\delta,1] \quad \text{$\LL^n$-a.e.\ on $\R^n$}.
\end{equation}
They also prove that the jump set $J_u$ is essentially closed, $\HH^{n-1}(\overline{J_u} \setminus J_u) = 0$, and that it satisfies uniform density estimates.
In~\cite{K}, it was proven that the jump set is locally the union of the graphs of two $C^{1,\alpha}$ functions provided that it is trapped between two planes which are sufficiently close.
The approaches in these papers highlighted the similarities and the differences between the thermal insulation problem and the Mumford-Shah functional.
In particular, for the thermal insulation problem, one has to deal with an harmonic function satisfying a Robin boundary condition at the boundary rather than a Neumann boundary condition.

In~\cite{LM}, we have shown the higher integrability of the gradient for minimizers of the thermal insulation problem, an analogue of De Giorgi's conjecture for the Mumford-Shah functional, and deduced that the singular part of the free boundary has Hausdorff dimension strictly less than $n - 1$.
Variants of this problem have been studied in~\cite{B1},~\cite{B2},~\cite{B3},~\cite{B4}.
A numerical implementation has been proposed in~\cite{BF}.

\subsection{Summary of results}

The purpose of this article is to provide minimality criteria for the thermal insulation functional by construction of calibrations.
Besides minimality criteria, we are also simply interested in understanding what calibrations look like for this kind of problem.
The article is divided into two parts.
In the first part, we fix an open set $A$ of $\R^n$ and we consider the homogeneous functional
\begin{equation}
    E_0(u) = \int_A \! \abs{\nabla u}^2 \dm x + \beta \int_{J_u \cap A} \! (u^+)^2 + (u^-)^2 \dm \HH^{n-1},
\end{equation}
where $u \in SBV(A)$.
In Theorem~\ref{thm_harmonic}, we present a sufficient condition so that a non-negative harmonic function $u \colon A \to \R$ is a minimizer of $E_0$ among all competitors $v \in SBV(A)$ with $\set{v \ne u} \subset \subset A$.
The condition is also necessary in dimension one but not in higher dimension.
An anologous questions had been studied for the homogeneous Mumford-Shah functional
\begin{equation}
    F_0(u,A) = \int_A \! \abs{\nabla u}^2 \dm x + \beta \HH^{n-1}(J_u)
\end{equation}
by Chambolle without calibrations (\cite[Theorem 3.1(i)]{F}) and Alberti, Bouchitt\'e, Dal Maso with calibrations (\cite[Section 4.6]{Alberti}).

In the second part, we come back to the full thermal insulation functional,
\begin{equation}
    E(u) = \int \! \abs{\nabla u}^2 \dm x + \beta \int_{J_u} \! (u^+)^2 + (u^-)^2 \dm \HH^{n-1} + \gamma^2 \LL^n(\set{u > 0})
\end{equation}
where $u \in SBV(\R^n)$ is such that $u = 1$ on a given bounded open set $\Omega$.
The section starts with an informal discussion about the case $\Omega = B(0,1)$.
The relevant competitors should be either the indicator function $\mathbf{1}_\Omega$ or an harmonic function supported in a bigger ball.
We can make explicit computations with these competitors to find minimality criterias.
Then we prove and generalize these criterias to other domains $\Omega$ using calibrations.
In Theorem~\ref{thm_indicator1} and~\ref{thm_indicator2}, we prove two sufficient conditions which imply that $\mathbf{1}_\Omega$ is a minimizer.
In Theorem~\ref{thm_u}, we come back back to the case $\Omega = B(0,1)$ and prove a sufficient condition so that an harmonic function supported in a bigger ball is a minimizer.

\textbf{Acknowledgement}. We would like to thank Antoine Lemenant and Antonin Chambolle for helpful discussions about extensions of normal vector fields.

\section{The homogenous functional}

\subsection{Statement of the problem and calibrations}

We fix a parameter $\beta > 0$.
Given a Borel set $S \subset \R^n$ and a $SBV$ function $u$ in a neighborhood of $S$, we define
\begin{equation}
    E_0(u,S) = \int_S \! \abs{\nabla u}^2 \dm x + \beta \int_{J_u \cap S} \! (u^+)^2 + (u^-)^2 \dm \HH^{n-1}.
\end{equation}

Let $V$ be an open set and let $u \colon V \to \R$ be a a non-negative harmonic function.
We are interested in finding a sufficient condition so that for all $v \in SBV(V)$ with $\set{v \ne u} \subset \subset V$, we have
\begin{equation}
    E_0(u,V) \leq E_0(v,V).
\end{equation}
It is equivalent to require that for all open set $A \subset \subset V$, for all $v \in SBV(V)$ with $v = u$ in $V \setminus \overline{A}$, we have
\begin{equation}
    E_0(u,\overline{A}) \leq E_0(v,\overline{A}).
\end{equation}

\begin{thm}\label{thm_harmonic}
    Let $A$ be a bounded open set of $\R^n$ with Lipschitz boundary
    Let $u \colon A \to \R$ be a non-negative harmonic function which has a $C^1$ extension in open set $V$ containing $\overline{A}$.
    We assume that there exists $0 \leq m \leq M$ such that $m \leq u \leq M$ in $A$ and
    \begin{equation}\label{eq_harmonic_condition}
        (M - m)^2 - (M - \delta)^2 \leq \beta_0 (m^2 + \delta^2),
    \end{equation}
    where
    \begin{equation}
        \beta_0 = \beta \frac{(M - m)}{\sup_A \abs{\nabla u}} \qquad \text{and} \qquad \delta = \frac{M}{1 + \beta_0}.
    \end{equation}
    Then $u$ is a minimizer of $E_0(v,\overline{A})$ among all $v \in SBV(V)$ such that $v = u$ in $V \setminus \overline{A}$.
\end{thm}

\begin{rmk}
    The condition (\ref{eq_harmonic_condition}) can be written
    \begin{equation}\label{eq_harmonic_condition2}
        \int_m^{\delta} \! 2 (M - t) \dm t \leq \beta_0 (m^2 + \delta^2),
    \end{equation}
    to highlight its similarity with the condition $\abs{\int_r^s \! \phi^x(x,t) \dm t} \leq \beta (r^2 + s^2)$ in the Definition~\ref{defi_harmonic_calibration} below (the definition of calibrations).
    The function
    \begin{equation}
        s \mapsto \beta_0 (m^2 + s^2) - \int_m^s \! 2 (M - t) \dm t
    \end{equation}
    attains its minimum at $s = \frac{M}{1 + \beta_0}$ so (\ref{eq_harmonic_condition2}) is also equivalent to say that for all $s \in [0,M]$,
    \begin{equation}
        \int_m^s \! 2 (M - t) \dm t \leq \beta_0 (m^2 + s^2).
    \end{equation}
\end{rmk}

\begin{rmk}
    Let us consider the case where $n = 1$, $A = ]0,h[$ (where $h > 0$) and $u$ is an affine function whose graph joins $(0,m)$ and $(h,M)$.
    Then the theorem condition is necessary.
    In this cases $\beta_0 = \beta h$, $\delta = \frac{M}{1 + \beta h}$ and the condition (\ref{eq_harmonic_condition}) amounts to
    \begin{equation}
        h^{-1} (M - m)^2 \leq \beta (m^2 + \delta^2) + h^{-1} (M - \delta)^2.
    \end{equation}
    One recognizes that the right-hand side is the Dirichlet energy of $u$.
    The left-hand side is the energy of the jump function whose graph joins $(0,m)$, $(0,\delta)$ and $(h,M)$.
\end{rmk}

\begin{rmk}
    The condition (\ref{eq_harmonic_condition}) is trivially satisfied if $m \geq \delta$, which can be simplified to
    \begin{equation}
        m \geq \beta^{-1} \sup_A \abs{\nabla u}.
    \end{equation}
\end{rmk}

\begin{rmk}
    For the homogeneous Mumford-Shah functional,
    \begin{equation}
        F_0(u) = \int_A \! \abs{\nabla u}^2 + \beta \HH^{n-1}(J_u),
    \end{equation}
    Chambolle found the condition $(M - m) \sup_A \abs{\nabla u} \leq \beta$ (see~\cite[Theorem 3.1 (i)]{F}).
    This is somewhat analogous to our condition because this can be rewritten
    \begin{equation}
        \int_m^M \! 2 (M - t) \dm t \leq \beta_0
    \end{equation}
    where $\beta_0 = \beta \frac{(M - m)}{\sup_A \abs{\nabla u}}$.
\end{rmk}

We are going to state the notion of calibrations associated to our problem.
The existence of such a vector field implies that $u$ is a minimizer of $E_0(v,\overline{A})$ among $v \in SBV(V)$ such that $v = u$ in $V \setminus \overline{A}$.
This is a consequence of~\cite[Section 2 and 3]{Alberti} but we provide a detailed explanation in Appendix~\ref{appendix_calibration}.
We underline that our Dirichlet problem is different from the one of~\cite[Definition 3.1]{Alberti} because our boundary condition is one-sided (a competitor $v$ might jump on $\partial A$).
This is why we define $\phi$ up to $\partial A \times [0,M]$.

\begin{defi}\label{defi_harmonic_calibration}
    Let $A$ be a bounded open set of $\R^n$ with Lipschitz boundary, let $V$ be an open set containing $\overline{A}$ and let $u \in SBV(V)$ be such that $0 \leq u \leq M$ on $A$ (for some $M > 0$).
    A calibration for $u$ in $\overline{A} \times [0,M]$ is a Borel map
    \begin{equation}
        \phi = (\phi^x,\phi^t) \colon \overline{A} \times [0,M] \to \R^n \times \R
    \end{equation}
    which is bounded and approximately regular on $\overline{A} \times [0,M]$, divergence-free on $A \times ]0,M[$ in the sense of distribution and such that
    \begin{enumerate}
        \item[(a)] $\phi^t(x,t) \geq \tfrac{1}{4} \abs{\phi^x(x,t)}^2$ for $\LL^n$-a.e.\ $x \in A$ and every $t \in [0,M]$;
        \item[(b)] $\abs{\int_r^s \! \phi^x(x,t) \dm t} \leq \beta (r^2 + s^2)$ for $\HH^{n-1}$-a.e.\ $x \in \overline{A}$ and every $r,s \in [0,M]$;
        \item[(a')] $\phi^x(x,u) = 2 \nabla u$ and $\phi^t(x,u) = \abs{\nabla u}^2$ for $\LL^n$-a.e.\ $x \in \overline{A}$;
        \item[(b')] $\int_{u^-}^{u^+} \! \phi^x(x,t) \dm t = \beta \left[(u^-)^2 + (u^+)^2\right]\nu_u$ for $\HH^{n-1}$-a.e.\ $x \in J_u \cap \overline{A}$.
    \end{enumerate}
\end{defi}

With regard to Theorem~\ref{thm_harmonic}, the candidate function $u$ is smooth so we have $J_u = \emptyset$ and we don't need to check (b').
We conclude this section with two miscellaneous remarks.

\begin{rmk}[Scaling]
    We detail the scaling properties of $E_0$.
    We write $E_0(u,\beta,A)$ to explicit the parameters $\beta$ in the definition of $E_0$.
    For all $M \in \R$, we have
    \begin{equation}
        E_0(M u, \beta, A) = M^2 E_0(u,\beta, A),
    \end{equation}
    and for all $h > 0$,
    \begin{equation}
        E_0(u_h, \beta h, h^{-1}A) = h^{2 - n} E_0(u,\beta,A).
    \end{equation}
    where $u_h(x) = u(h \cdot x)$.
    Thus, for all $h > 0$ and $M \geq 0$,
    \begin{equation}
        E_0(M^{-1} u_h, \beta h, h^{-1}A) = M^{-2} h^{2 - n} E_0(u,\beta,A).
    \end{equation}
\end{rmk}



\begin{rmk}[Slope along a jump]\label{rmk_jump}
    This remark is an example of application of~\cite[Lemma 2.5]{Alberti} that we will use repeatedly in the next constructions.
    We consider three $C^1$ functions $\sigma, u,v \colon \R^n \to \R$.
    We work in $\R^n \times \R$ and we introduce the hypersurface
    \begin{equation}
        H   = \set{(x,t) \in \R^n \times \R | t = \sigma(x)}
    \end{equation}
    and the vector field
    \begin{equation}
        \phi =
        \begin{cases}
            (2 \nabla u,\abs{\nabla u}^2) & \text{for $t > \sigma(x)$} \\
            (2 \nabla v,\abs{\nabla v}^2) & \text{for $t < \sigma(x)$}.
        \end{cases}
    \end{equation}
    A normal vector field to $H$ is $(-\nabla\sigma,1)$ and we have along $H$,
    \begin{equation}
        \phi(x,\sigma(x)^+)
        \cdot
        \begin{pmatrix} -\nabla\sigma \\ 1 \end{pmatrix}
        =
        \phi(x,\sigma(x)^-)
        \cdot
        \begin{pmatrix} -\nabla\sigma \\ 1 \end{pmatrix}.
    \end{equation}
    if and only if $\nabla \sigma = \tfrac{1}{2} (\nabla u + \nabla v)$.
    Therefore, $\phi$ is divergence-free in the sense of distributions provided that $u,v$ are harmonic and $\sigma = \tfrac{1}{2} (u + v)$ (modulo an additive constant).
\end{rmk}

\subsection{The one dimensional case}\label{section_harmonic_1d}

\subsubsection{A short analysis}

We fix constants $a < b$ and $0 \leq m \leq M$.
We consider the functional
\begin{equation}
    E_0(u) = \int \! \abs{\nabla u}^2 \dm x + \beta \int_{J_u} \! (u^+)^2 + (u^-)^2 \dm \HH^{n-1}
\end{equation}
defined over the function $u \in SBV(\R)$ such that $u = m$ on $]-\infty,a[$ and $u = M$ on $]b,\infty]$.
By scaling, it suffices to study the case $[a,b] = [0,1]$.

We are looking for the minimizer(s), depending on $m$ and $M$.
Here is a summary (without details).
In the case $m = M$, the affine (constant) competitor is the only minimizer and from now on, we assume $m < M$.
First, we try to find the jump competitors which have the least energy.
It is never convenient to do more than one jump.
At each side of the jump, the optimal value of the function corresponds to the Robin condition $\partial_\nu u = \beta u$ (where $\nu$ is the inward unit normal vector to the side).
The optimal location of the jump may be $x_0 = 0$ or some $x_0 \in ]0,1[$.
It cannot be $x_0 = 1$ though and this comes from the fact that $m < M$.
If $m < \frac{M}{1 + \beta}$, the best location is $x_0 = 0$ and the jump is going from $m$ to $\frac{M}{1 + \beta}$.
If $m > \frac{M}{1 + \beta}$, the best location is a certain $x_0 \in ]0,1[$ but then the affine competitor has necessarily a smaller energy.
We conclude that the affine competitor is a minimizer if and only if it is better than the jump at $x = 0$, that is
\begin{equation}\label{eq_harmonic_1d_condition}
    (M - m)^2 \leq \beta m^2 + \left(M - \frac{M}{1 + \beta}\right)^2.
\end{equation}
We define $\delta = \frac{M}{1 + \beta}$ and we observe that (\ref{eq_harmonic_1d_condition}) is equivalent to
\begin{equation}\label{eq_harmonic_1d_condition2}
    \int_m^\delta \! 2 (M - t) \dm t \leq \beta (m^2 + \delta^2).
\end{equation}

Now, we try to guess the calibration in the limit case
\begin{equation}\label{eq_harmonic_1d_limit_case}
    \int_m^\delta \! 2 (M - t) \dm t = \beta (m^2 + \delta^2).
\end{equation}
Notice that we have necessarily $m < \delta$.
There are two minimizers; the first one is the affine function $u$ whose graph joins the points $(0,m)$, $(1,M)$ and the second one is the jump function $v$ whose graph joins the points $(0,m)$, $(0,\delta)$, $(1,M)$.
A nice property of calibrations is that they calibrate all minimizers simultaneously (see Remark~\ref{rmk_calibration_equality} or~\cite[Remark 3.6]{Alberti}).
We should have $\phi = (2\nabla u,\abs{\nabla u}^2)$ on the graph of $u$, that is $\phi = (2 (M - m), (M - m)^2)$ on $t = m + (1 - m)x$.
And we should have as well $\phi = (2 \nabla v, \abs{\nabla v}^2)$ on the graph of $v$, that is $\phi = (2 (M - \delta), (M - \delta)^2)$ on $t = m + (M - \delta)x$.
Finally, we should have
\begin{equation}
    \int_m^\delta \! \phi^x(0,t) \dm t = \beta (m^2 + \delta^2).
\end{equation}
A simple solution is to set
\begin{equation}
    \phi = \left(\frac{2 (M - t)}{1 - x}, \left[\frac{M - t}{1 - x}\right]^2\right)
\end{equation}
for $t \geq m + (1 - m)x$.
Next, we try to determine $\phi^x(0,t)$ for $t \in [0,m]$.
We have necessarily
\begin{align}
    \int_0^m \! \phi^x(0,t) \dm t & = \int_0^\delta \! \phi^x(0,t) \dm t - \int_m^\delta \! \phi^x(0,t) \dm t \\
                                  & \leq \beta \delta^2 - \beta (m^2 + \delta^2)                              \\
                                  & \leq - \beta m^2.
\end{align}
We suggest to set $\phi^x(0,t) = -2 \beta t$ on $[0,m]$.
Then we try to extend $\phi$ in a simple way while respecting the requirements of calibrations and we arrive at
\begin{equation}
    \phi =
    \begin{cases}
        \left(\frac{2 (M - t)}{1 - x},\left[\frac{M - t}{1 - x}\right]^2\right) & \text{if $m + \tau x \leq t \leq M$}            \\
        \left(2 (M - m),\left[M - m\right]^2\right)                             & \text{if $m + \sigma x \leq t \leq m + \tau x$} \\
        \left(-2 \beta m,\beta^2 m^2\right)                                     & \text{if $m \leq t \leq m + \sigma x$}          \\
        \left(-2 \beta t,\beta^2 m^2\right)                                     & \text{if $0 \leq t \leq m$.}
    \end{cases}
\end{equation}
where $\tau = M - m$ and $\sigma = \frac{1}{2} ((M - m) - \beta m)$.
The slope $\sigma$ has been chosen as in Remark~\ref{rmk_jump}.
In the next section, we generalize this construction when there is only an inequality in (\ref{eq_harmonic_1d_limit_case}).
\begin{figure}[ht]
    \begin{center}
        \includegraphics[width = \linewidth]{./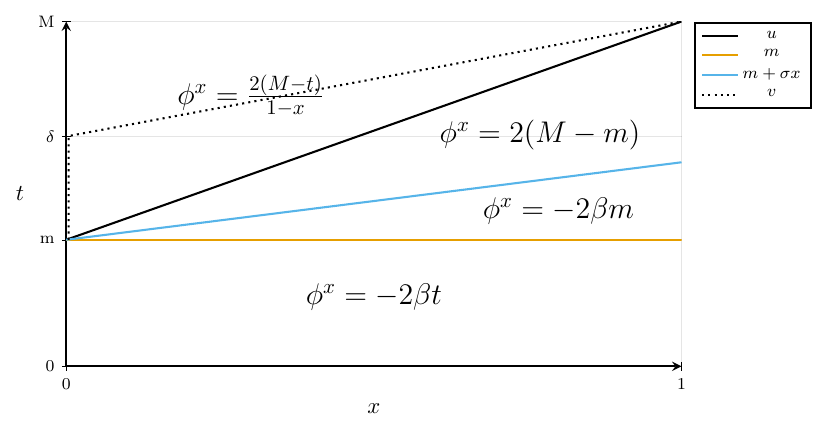}
    \end{center}
\end{figure}

\subsubsection{The construction}

\textbf{Context}.
Let $0 \leq m \leq M$ and let $u \colon [0,1] \to \R$ be the affine function such that $u(0) = m$ and $u(1) = M$.
We assume that
\begin{equation}
    \int_m^\delta \! 2 (M - t) \dm t \leq \beta (m^2 + \delta^2),
\end{equation}
where $\delta = \frac{M}{1 + \beta}$.
We build a calibration for $u$ in $[0,1] \times [0,M]$.

The construction brings into play an intermediary constant $0 \leq \lambda \leq \beta$ such that
\begin{equation}
    \int_m^\delta \! 2 (M - t) \dm t \leq \lambda m^2 + \beta \delta^2.
\end{equation}
We will also need the conditions $\lambda m \leq M - m$ and $\lambda m \leq \frac{\beta M}{1 + \beta}$.
If $m \leq \delta$, these two conditions follows from the fact that $\lambda \leq \beta$.
Otherwise, it suffices to take $\lambda = 0$ for example.
Finally, we define
\begin{equation}
    \phi =
    \begin{cases}
        \left(\frac{2 (M - t)}{1 - x},\left[\frac{M - t}{1 - x}\right]^2\right) & \text{if $m + \tau x \leq t \leq M$}            \\
        \left(2 (M - m),\left[M - m\right]^2\right)                             & \text{if $m + \sigma x \leq t \leq m + \tau x$} \\
        \left(-2 \lambda m,\lambda^2 m^2\right)                                 & \text{if $m \leq t \leq m + \sigma x$}          \\
        \left(-2 \lambda t,\lambda^2 m^2\right)                                 & \text{if $0 \leq t \leq m$.}
    \end{cases}
\end{equation}
where $\tau = M - m$ and $\sigma = \frac{1}{2} ((M - m) - \lambda m)$.
Notice that we have $0 \leq \sigma \leq \tau$ because $0 \leq \lambda m \leq M - m$.
The slope $\sigma$ has been chosen as in Remark~\ref{rmk_jump}.

We prove that for all $x$ and all $r \leq s$,
\begin{equation}
    \abs{\int_r^s \! \phi^x(x,t) \dm t} \leq \beta (r^2 + s^2).
\end{equation}
We have globally
\begin{equation}
    \phi^x \geq -2 \lambda t \geq -2 \beta t
\end{equation}
so it suffices to control $\int_r^s \! \phi^x \dm t$ from above.
Let us fix $t$.
We are going to see that the function
\begin{equation}
    r \mapsto \beta (r^2 + s^2) - \int_r^s \! \phi^x(x,t) \dm t
\end{equation}
is non-decreasing on $[0,M]$.
If $0 \leq r \leq m$,
\begin{equation}
    \beta (r^2 + s^2) - \int_r^s \! \phi^x(x,t) \dm t = \beta r^2 - \lambda r^2 + (\ldots)
\end{equation}
where $(\ldots)$ does not depend on $r$.
The right-hand side is non-decreasing on $[0,m]$ because $\lambda \leq \beta$.
If $m \leq r \leq m + \sigma x$,
\begin{equation}
    \beta (r^2 + s^2) - \int_r^s \! \phi^x(x,t) \dm t = \beta r^2 - 2 \lambda m r + (\ldots)
\end{equation}
where $(\ldots)$ does not depend on $r$.
The right-hand side is non-decreasing on $[m,m + \sigma x]$ because $\lambda \leq \beta$.
Finally, the function in non-decreasing on $[m + \sigma x, M]$ because $\phi^x$ is non-negative on this interval.
We conclude that it suffice to show that for all $x$ and $t$,
\begin{equation}
    \int_0^s \! \phi^x(x,t) \dm t \leq \beta s^2.
\end{equation}
This is trivial if $s \leq m$ so we assume $s > m$.
There exists $x_0$ such that $s = m + (M - m)x_0$.
The function
\begin{equation}
    x \mapsto \int_r^s \! \phi^x(x,t) \dm t
\end{equation}
is non-decreasing on $[x_0,1]$ because the part where $\phi^x \leq 0$ gets bigger at the expense of the part where $\phi^x \geq 0$.
Now, it suffices to show that for all $x$ and $m + (M - m)x \leq s \leq M$,
\begin{equation}
    \int_0^s \! \phi^x(x,t) \dm t \leq \beta s^2.
\end{equation}
We apply the divergence theorem (\cite[Lemma 2.4]{Alberti}) in the polygone delimited by the cycle $(x,0), (x,s), (1,M), (0,1), (0,0)$.
We get
\begin{equation}
    \int_0^s \! \phi^x(x,t) \dm t = -\frac{(M - s)^2}{1 - x} + (M - m)^2 - \lambda m^2 + \lambda^2 m^2 x
\end{equation}
so
\begin{equation}
    \beta s^2 - \int_0^s \! \phi^x(x,t) \dm t = \beta s^2 + \frac{(M - s)^2}{1 - x} - \lambda^2 m^2 x + \lambda m^2 - (M - m)^2.
\end{equation}
The function $s \mapsto \beta s^2 + \frac{(M - s)^2}{1 - x}$ is minimal for $s = \frac{M}{1 + \beta (1 - x)}$ and its the minimum value is $\frac{\beta M^2}{1 + \beta (1 - x)}$.
We have therefore
\begin{equation}
    \beta s^2 - \int_0^s \! \phi^x(x,t) \dm t \geq \frac{\beta M^2}{1 + \beta (1 - x)} - \lambda^2 m^2 x + \lambda m^2 - (M - m)^2.
\end{equation}
The function $x \mapsto \frac{\beta M}{1 + \beta (1 - x)} - \lambda^2 m^2 x$ is non-decreasing on $[0,1]$ because $\lambda m \leq \frac{\beta M}{1 + \beta}$.
We are thus led to
\begin{equation}
    \beta s^2 - \int_0^s \! \phi^x(x,t) \dm t \geq \frac{\beta M^2}{1 + \beta} + \lambda m^2 - (M - m)^2
\end{equation}
As $\frac{\beta M^2}{1 + \beta} = \beta \delta^2 + (M - \delta)^2$, the previous quantity can be rewritten
\begin{equation}
    \lambda m^2 + \beta \delta^2 - \int_m^\delta \! 2 (M - t) \dm t
\end{equation}
and this is non-negative by assumption.

\subsection{Proof of Theorem~\ref{thm_harmonic}}

\textbf{Context}.
Let $A$ be a bounded open set of $\R^n$ with Lipschitz boundary.
Let $u \colon A \to \R$ be a non-negative harmonic function which has $C^1$ extension in a neighborhood of $\overline{A}$.
We assume that for some $0 \leq m \leq M$, we have $m \leq u \leq M$ in $A$ and
\begin{equation}
    \int_m^{\delta} \! 2 (M - t) \dm t \leq \beta_0 (m^2 + \delta^2),
\end{equation}
where
\begin{equation}
    \beta_0 = \beta \frac{(M - m)}{\sup_A \abs{\nabla u}} \qquad \text{and} \qquad \delta = \frac{M}{1 + \beta_0}.
\end{equation}
We build a calibration for $u$ in $\overline{A} \times [0,M]$.

The construction brings into play an intermediary constant $0 \leq \lambda \leq \beta_0$ such that
\begin{equation}
    \int_m^{\delta} \! 2 (M - t) \dm t \leq \lambda m^2 + \beta_0 \delta^2.
\end{equation}
We also need the conditions $\lambda m \leq M - m$ and $\lambda m \leq \frac{\beta_0 M}{1 + \beta_0}$.
If $m \leq \delta$, these two conditions follow from the fact that $\lambda \leq \beta_0$.
Otherwise, it suffices to take $\lambda = 0$ for example.
Finally, we define
\begin{equation}
    \phi = \left(\varphi^x \cdot \frac{\nabla u(x)}{M - m}, \varphi^t \cdot \abs{\frac{\nabla u(x)}{M - m}}^2\right),
\end{equation}
where
\begin{equation}
    \varphi =
    \begin{cases}
        \left(2 (M - m)\frac{M - t}{M - u},\left[(M - m)^2\frac{M - t}{M - u}\right]^2\right) & \text{if $u(x) \leq t \leq M$}         \\
        \left(2 (M - m),\left[M - m\right]^2\right)                                           & \text{if $\sigma(x) \leq t \leq u(x)$} \\
        \left(-2 \lambda m,\lambda^2 m^2\right)                                               & \text{if $m \leq t \leq \sigma(x)$}    \\
        \left(-2 \lambda t,\lambda^2 m^2\right)                                               & \text{if $0 \leq t \leq m$}.
    \end{cases}
\end{equation}
and
\begin{align}
    \sigma(x) & = m + \frac{1}{2}(u - m)(1 - \tfrac{\lambda m}{M - m}) \\
              & = m + \frac{1}{2} \frac{u - m}{M - m}(M - m - \lambda m).
\end{align}
As $0 \leq \lambda m \leq M - m$, we see that $m \leq \sigma(x) \leq u(x)$.
We prove that for all $x$ and $r \leq s$,
\begin{equation}
    \abs{\int_r^s \! \varphi^x(x,t) \dm t} \leq \beta_0 (r^2 + s^2).
\end{equation}
This is a minor variant of the one-dimensional case.
We have globally
\begin{equation}
    \varphi^x \geq -2 \lambda t \geq -2 \beta_0 t
\end{equation}
so it suffices to control $\int_r^s \! \varphi^x \dm t$ from above.
Let us fix $t$.
We are going to see that the function
\begin{equation}
    r \mapsto \beta_0 (r^2 + s^2) - \int_r^s \! \varphi^x(x,t) \dm t
\end{equation}
is non-decreasing on $[0,M]$.
If $0 \leq r \leq m$,
\begin{equation}
    \beta_0 (r^2 + s^2) - \int_r^s \! \varphi^x(x,t) \dm t = \beta_0 r^2 - \lambda r^2 + (\ldots)
\end{equation}
where $(\ldots)$ does not depend on $r$.
The right-hand side is non-decreasing on $[0,m]$ because $\lambda \leq \beta_0$.
If $m \leq r \leq \sigma(x)$,
\begin{equation}
    \beta_0 (r^2 + s^2) - \int_r^s \! \varphi^x(x,t) \dm t = \beta_0 r^2 - 2 \lambda m r + (\ldots)
\end{equation}
where $(\ldots)$ does not depend on $r$.
The right-hand side is non-decreasing on $[m,\sigma(x)]$ because $\lambda \leq \beta_0$.
Finally, the function in non-decreasing on $[u(x),M]$ because $\varphi^x$ is non-negative on this interval.
We conclude that it suffice to show that for all $x$ and $s$,
\begin{equation}
    \int_0^s \! \varphi^x(x,t) \dm t \leq \beta_0 s^2.
\end{equation}
This is trivial if $s \leq m$ so we assume $s > m$.
There exists $x_0$ such that $s = u(x_0)$.
Since $\sigma \leq u$, we have in particular $\sigma(x_0) \leq s$.
For $x$ such that $\sigma(x) \leq s \leq u(x)$, we compute
\begin{align}
    \int_0^s \! \varphi^x(x,t) \dm t  & = -2 \lambda m^2 - 2 \lambda m (\sigma - m) + 2 (M - m) (s - \sigma) \\
    \begin{split}
                                      & = -2 \lambda m - 2 \lambda m (\sigma - m) + 2 (M - m) (s - m)        \\
                                      & \hspace{117pt} + 2 (M - m) (m - \sigma)
    \end{split}                      \\
    \begin{split}
                                      & = -2 \lambda m^2 - 2 (\sigma - m) (M - m + \lambda m)                \\
                                      & \hspace{117pt} + 2 (M - m) (s - m).
    \end{split}
\end{align}
As $u(x_0) \leq u(x)$ and $0 \leq \lambda m \leq M - m$, we have $\sigma(x_0) \leq \sigma(x)$ and it follows that
\begin{equation}
    \int_0^s \! \varphi^x(x,u) \leq \int_0^s \! \varphi^x(x_0,u) \dm.
\end{equation}
Now, it suffices to show that for all $x$ and $u(x) \leq s \leq m$,
\begin{equation}
    \int_0^s \! \varphi^x(x,t) \dm t \leq \beta_0 s^2.
\end{equation}
We compute
\begin{multline}
    \int_0^s \! \varphi^x(x,t) \dm t = - \lambda m^2 - 2 \lambda m (\sigma - m) + 2 (M - m) (u - \sigma) \\ + \frac{M - m}{M - u}\left((M - u)^2 - (M - s)^2\right).
\end{multline}
By definition
\begin{align}
    \sigma - m & = \frac{1}{2}\frac{u - m}{M - m} (M - m - \lambda m) \\
    u - \sigma & = \frac{1}{2}\frac{u - m}{M - m} (M - m + \lambda m)
\end{align}
so
\begin{equation}
    - 2 \lambda m (\sigma - m) + 2 (M - m) (u - \sigma) = \frac{u - m}{M - m} ((M - m)^2 + \lambda^2 m^2).
\end{equation}
We also write
\begin{align}
    (M - m) (M - u) & = (M - m)^2 + (M - m) (m - u) \\
                    & = (M - m)^2 - (M - m)^2 \frac{u - m}{M - m}
\end{align}
and we arrive at
\begin{equation}
    \int_0^s \! \varphi^x(x,t) \dm t = - \lambda m^2 + \lambda^2 m^2 \frac{u - m}{M - m} + (M - m)^2 - \frac{M - m}{M - u} (M - s)^2.
\end{equation}
Therefore
\begin{multline}
    \beta_0 s^2 - \int_0^s \! \varphi^x(x,t) \dm t = \beta_0 s^2 + \frac{M - m}{M - u} (M - t)^2 \\ - \lambda^2 m^2 \frac{u - m}{M - m} + \lambda m^2 - (M - m)^2.
\end{multline}
For a fixed $x$, the function $s \mapsto \beta_0 s^2 + \frac{M - m}{M - u} (M - s)^2$ is minimal for
\begin{equation}
    s = \frac{M}{1 + \beta_0 \frac{M - u}{M - m}}
\end{equation}
and its minimum value is
\begin{equation}
    \frac{\beta_0 M^2}{1 + \beta_0 \frac{M - u}{M - m}}.
\end{equation}
We have therefore
\begin{multline}
    \beta_0 s^2 - \int_0^s \! \varphi^x(x,t) \dm t \\ \geq \frac{\beta_0 M^2}{1 + \beta_0 \frac{M - u}{M - m}} - \lambda^2 m^2 \frac{u - m}{M - m} + 2 \lambda m^2 - (M - m)^2.
\end{multline}
The function
\begin{equation}
    x \mapsto \frac{\beta_0 M^2}{1 + \beta_0 \frac{M - x}{M - m}} - \lambda^2 m^2 \frac{x - m}{M - m}
\end{equation}
is non decreasing on $[m,M]$ because $\lambda m \leq \frac{\beta_0 M}{1 + \beta_0}$.
We are thus led to
\begin{equation}
    \beta_0 s^2 - \int_0^s \! \varphi^x(x,t) \dm t \geq \frac{\beta_0 M^2}{1 + \beta_0} + \lambda m^2 - (M - m)^2
\end{equation}
As $\frac{\beta_0 M^2}{1 + \beta_0} = \beta_0 \delta^2 + (M - \delta)^2$, the right-hand side can be rewritten
\begin{equation}
    \lambda m^2 + \beta_0 \delta^2 - \int_m^{\delta} \! 2 (M - t) \dm t.
\end{equation}

\section{The full functional}

\subsection{Definition of the problem and calibrations}

We fix a bounded open set $\Omega \subset \R^n$ with Lipschitz boundary.
We fix parameters $\beta > 0$ and $\gamma > 0$.
Given $u \in SBV(\R^n)$ such that $u = 1$ on $\Omega$ and $0 \leq u \leq 1$ in $\R^n$, we define
\begin{equation}\label{eq_E}
    E(u) = \int \! \abs{\nabla u}^2 \dm x + \beta \int_{J_u} \! (u^+)^2 + (u^-)^2 \dm \HH^{n-1} + \gamma^2 \LL^n(\set{u > 0}).
\end{equation}
It is shown in~\cite[Theorem 4.2, Corollary 3.3]{CK} that at least one minimizer exists and that there exists $\delta > 0$ (depending only on $\Omega$, $\beta$, $\gamma$) such that $\Omega \subset \subset B(0,\delta^{-1})$ and all minimizers have a compact support in $B(0,\delta^{-1})$.
So we can work without loss of generality with the competitors that have a compact support in $B(0,\delta^{-1})$.

Now, we state the notion of calibration associated to this problem.
The existence of such a vector field implies that $u$ is a minimizer of $E(v)$ among $v \in SBV(\R^n)$ such that $v = 1$ on $\Omega$ and $0 \leq v \leq 1$ in $\R^n$ (see Appendix~\ref{appendix_calibration}).
We define $\phi$ on $(\R^n \setminus \Omega) \times [0,1]$ but it would be enough for $\phi$ to be defined on $\left(B(0,\delta^{-1}) \setminus \Omega\right) \times [0,1]$.

\begin{defi}\label{defi_isolation_calibration}
    Let $\Omega$ be a bounded open set of $\R^n$ with Lipschitz boundary, let $u \in SBV(\R^n)$ be such that $u = 1$ on $\Omega$, $0 \leq u \leq 1$ in $\R^n$ and $u$ has a compact support.
    A calibration for $u$ is a Borel map
    \begin{equation}
        \phi = (\phi^x,\phi^t) \colon (\R^n \setminus \Omega) \times \R \to \R^n \times \R
    \end{equation}
    which is bounded and approximately regular on $(\R^n \setminus \Omega) \times [0,1]$, divergence-free on $(\R^n \setminus \overline{\Omega}) \times ]0,1[$ in the sense of distribution and such that
    \begin{enumerate}
        \item[(a)] $\phi^t(x,t) \geq \tfrac{1}{4} \abs{\phi^x(x,t)}^2 - \gamma^2 \mathbf{1}_{\set{t > 0}}(t)$ for $\LL^n$-a.e.\ $x \in \R^n \setminus \overline{\Omega}$ and every $t \in [0,1]$;
        \item[(b)] $\abs{\int_r^s \! \phi^x(x,t) \dm t} \leq \beta (r^2 + s^2)$ for $\HH^{n-1}$-a.e.\ $x \in \R^n \setminus \Omega$ and every $r,s \in [0,1]$;
        \item[(a')] $\phi^x(x,u) = 2 \nabla u$ and $\phi^t(x,u) = \abs{\nabla u}^2 - \gamma^2 \mathbf{1}_{\set{t > 0}}(u)$ for $\LL^n$-a.e.\ $x \in \R^n \setminus \overline{\Omega}$;
        \item[(b')] $\int_{u^-}^{u^+} \! \phi^x(x,t) \dm t = \beta \left[(u^-)^2 + (u^+)^2\right]\nu_u$ for $\HH^{n-1}$-a.e.\ $x \in J_u$.
    \end{enumerate}
\end{defi}

We are interested in finding minimality criteria in the case $\Omega = B(0,1)$ or $\Omega$ convex.
Similar questions were studied by De Pauw and Smets for the Mumford-Shah functional (\cite{DPS}), without calibrations.

\subsubsection{Informal computations with \texorpdfstring{$\Omega = B(0,1)$}{?© = B(0,1)}}\label{section_informal}

We present informal computations in the case $\Omega = B_1$ (the open unit ball centred at the origin).
We introduce for $r > 0$,
\begin{equation}\label{eq_gamma}
    \Gamma(r) =
    \begin{cases}
        r - 1                       & \ \text{if $n = 1$} \\
        \ln(r)                      & \ \text{if $n = 2$} \\
        \frac{1}{n-2} (1 - r^{2-n}) & \ \text{if $n \geq 3$}.
    \end{cases}
\end{equation}
Given $x \in \mathbf{R}^n$, we write $r$ for $\abs{x}$.
Thus, $x \mapsto \Gamma(r)$ is an harmonic positive function on $\R^n \setminus \overline{B_1}$ which is $0$ on $\partial B_1$.

We assume without proof that the only relevant competitors $u$ are of the following form.
Either $u$ is the indicator function of $B_1$.
Either there exists $R > 1$ and $0 < \delta < 1$ such that $u$ is radial continuous on $\overline{B}_R$, $u = 1$ on $\overline{B_1}$, $u = \delta$ on $\partial B_R$ and $u = 0$ on $\R^n \setminus \overline{B}_R$.
For a fixed $R > 1$, the first Euler-Lagrange equation says that $u$ should be harmonic in $B_R \setminus \overline{B_1}$ and that $\delta = \delta(R)$ should be determined by the Robin boundary condition
\begin{equation}
    -\partial_r u = \beta u \ \text{on $\partial B_R$}.
\end{equation}
We find
\begin{equation}\label{eq_delta}
    \delta(R) = \frac{1}{1 + \beta R^{n-1} \Gamma(R)}
\end{equation}
and
\begin{equation}\label{eq_u}
    u(x) =
    \begin{cases}
        1                                     & \ \text{for $\abs{x} \leq 1$}        \\
        1 - \beta \delta(R) R^{n-1} \Gamma(x) & \ \text{for $1 \leq \abs{x} \leq R$} \\
        0                                     & \ \text{for $\abs{x} > R$}.
    \end{cases}
\end{equation}
We plot an example of $\delta$ and $u$ on Figure \ref{fig_u}.
\begin{figure}[ht]
    \begin{center}
        \includegraphics[width = \linewidth]{./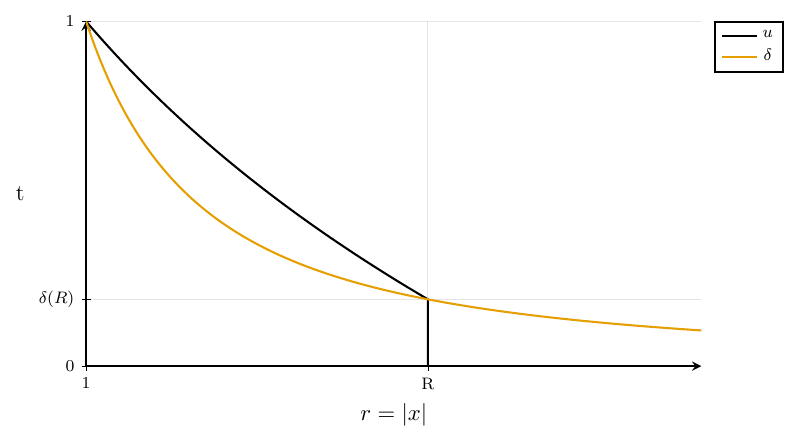}
        \caption{$n = 2$, $\beta = 3$ and $R = 2$}
        \label{fig_u}
    \end{center}
\end{figure}

An integration by parts combined with the Dirichlet condition $u = 1$ on $\partial B_1$ and the Robin condition $-\partial_r u = \beta u$ on $\partial B_R$ shows that
\begin{equation}
    \int_{B_R \setminus B_1} \! \abs{\nabla u}^2 \dm x + \beta \int_{\partial B_R} \! u^2 \dm \HH^{n-1} = \beta \int_{\partial B_R} \! u \dm \HH^{n-1}.
\end{equation}
We conclude that the energy of $u$ is
\begin{equation}
    E = n \omega_n \beta R^{n-1} \delta(R) + \omega_n \gamma^2 R^n.
\end{equation}
Now, we consider $E$ as functions of $R \in [1,+\infty[$ that we try to optimize.
We observe that $n \omega_n R^{n-1} \beta \delta(R)$ is the flux of the vector field $x \mapsto \beta \delta(r) e_r$ through $\partial B_R$.
We compute
\begin{align}
    \mathrm{div}(\delta(r) e_r) & = \delta'(r) + \frac{(n - 1)}{r} \delta(r) \\
                                & = - \left(\beta - \frac{(n - 1)}{r}\right) \delta(r)^2
\end{align}
so
\begin{equation}
    E(R) = E(1) + \int_{B_R \setminus B_1} \! \left[\gamma^2 - \left(\beta^2 - \frac{(n - 1) \beta}{r}\right) \delta(r)^2\right] \dm x,
\end{equation}
where $r$ means $\abs{x}$.
This shows in particular that
\begin{equation}\label{eq_Eprime}
    E'(R) = n \omega_n R^{n-1} \left[\gamma^2 - \left(\beta^2 - \frac{(n - 1) \beta}{R}\right) \delta(R)^2\right].
\end{equation}
The critical radii $R > 1$ are characterised by the equations
\begin{equation}
    \left(\beta^2 - \frac{\beta (n - 1)}{R}\right) \delta(R)^2 = \gamma^2.
\end{equation}
\begin{rmk}
    As expected, this coincides with the second Euler-Lagrange equation
    \begin{equation}
        \abs{\nabla u}^2 + \gamma^2 + \beta u^2 (H - 2 \beta) = 0
    \end{equation}
    where $H = (n - 1) R^{-1}$ is the mean scalar curvature of $\partial B_R$ with respect to $-e_r$ (see~\cite[Definition 7.32]{Ambrosio}, not to be confused with the arithmetic mean of the principal curvatures which is equal to $R^{-1}$).
    A proof of this formula for $C^{1,\alpha}$ surfaces is presented in~\cite[Theorem 15.1]{K}.
\end{rmk}
Depending on the parameters $\beta$, $\gamma$, the function $R \mapsto E(R)$ may not be convex and the condition $E'(R) = 0$ may not suffice to characterize minimizers.
See Figure~\ref{fig_E}.

\begin{figure}[ht]
    \begin{center}
        \includegraphics[width = 0.8\textwidth]{./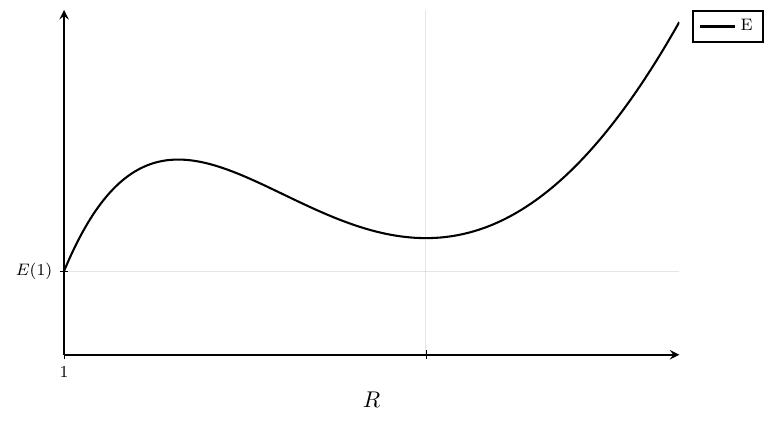}
        \caption{$n = 2$, $\beta = 1$ and $\gamma = 0.34$}
        \label{fig_E}
    \end{center}
\end{figure}

\subsubsection{Three sufficient conditions of minimality}

An sufficient condition for $\mathbf{1}_{B(0,1)}$ to be a minimizer is that the function $r \mapsto E(r)$ is non-decreasing on $[1,+\infty[$.
This is equivalent to the fact that for all $r \geq 1$,
\begin{equation}\label{eq_E_increasing}
    \left(\beta^2 - \frac{(n - 1) \beta}{r}\right) \delta(r)^2 \leq \gamma^2.
\end{equation}
In particular, it suffices that $\beta \leq \gamma$.
In the two next theorems, we generalize the sufficient conditions $\beta \leq \gamma$ and (\ref{eq_E_increasing}) to other domains.

\begin{thm}\label{thm_indicator1}
    Let $\Omega$ be a $C^1$ bounded open set of $\R^n$ and assume that its outward unit normal vector field has a continuous extension $\nu \colon \R^n \setminus \Omega \to \R^n$ such that $\abs{\nu} \leq 1$ and which is divergence-free on $\R^n \setminus \overline{\Omega}$ in the sense of distribution.
    If $\beta \leq \gamma$, then $\mathbf{1}_\Omega$ has a calibration.
\end{thm}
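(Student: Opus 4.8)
The plan is to exhibit an explicit calibration for $u=\mathbf 1_\Omega$ and then simply verify the four axioms of the definition given just before the statement. Write $\nu$ for the continuous extension of the outward unit normal provided by the hypothesis, so that $|\nu|\le 1$ on $\R^n\setminus\Omega$ and $\operatorname{div}\nu=0$ on $\R^n\setminus\overline\Omega$ in the distributional sense. The choice of $\phi$ is essentially forced by the axioms: on $\R^n\setminus\overline\Omega$ we have $u\equiv 0$, $\nabla u\equiv 0$, so (a') demands $\phi^x(x,0)=0$ and $\phi^t(x,0)=0$; while on $J_u=\partial\Omega$ (where $u^-=0$, $u^+=1$, and $\nu_u$ coincides with the outward normal $\nu$ by the SBV jump convention) axiom (b') demands $\int_0^1\phi^x(x,t)\dm t=\beta\,\nu(x)$. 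The simplest field compatible with all of this, taken with $\phi^x$ parallel to $\nu$ and $\phi^t\equiv 0$, is
\[
\phi(x,t)=\bigl(2\beta t\,\nu(x),\,0\bigr),\qquad (x,t)\in(\R^n\setminus\Omega)\times[0,1],
\]
since $\int_0^1 2\beta t\dm t=\beta$. (Only the range $t\in[0,1]$ matters, as the relevant competitors satisfy $0\le v\le 1$.)

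Next I would check the structural requirements and the axioms one at a time. The field $\phi$ is continuous (because $\nu$ is) and bounded by $2\beta$, hence bounded and approximately regular. It is divergence-free on $(\R^n\setminus\overline\Omega)\times\,]0,1[$: indeed $\operatorname{div}_{(x,t)}\phi=2\beta t\,\operatorname{div}_x\nu(x)+\partial_t 0=0$, the first term vanishing distributionally slice by slice in $t$ since $\nu$ is divergence-free. Axiom (a') is immediate from $u\equiv 0$ on $\R^n\setminus\overline\Omega$, as noted above. Axiom (b') is the computation $\int_0^1 2\beta t\,\nu(x)\dm t=\beta\,\nu(x)=\beta[(u^-)^2+(u^+)^2]\nu_u$ on $\partial\Omega$. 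Axiom (b) follows from $\int_r^s 2\beta t\dm t=\beta(s^2-r^2)$: since $0\le r<s$ this quantity lies in $[0,\beta(r^2+s^2)]$, so $\bigl|\int_r^s\phi^x(x,t)\dm t\bigr|\le\beta(s^2-r^2)\,|\nu(x)|\le\beta(r^2+s^2)$.

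The only point where the two quantitative hypotheses $|\nu|\le 1$ and $\beta\le\gamma$ are used is axiom (a). For $t\in\,]0,1]$,
\[
\tfrac14\abs{\phi^x(x,t)}^2-\gamma^2\mathbf 1_{]0,1]}(t)=\beta^2 t^2\,\abs{\nu(x)}^2-\gamma^2\le\beta^2-\gamma^2\le 0=\phi^t(x,t),
\]
and for $t=0$ both sides vanish. With all four axioms and the structural conditions in place, the minimality of $\mathbf 1_\Omega$ among competitors $v\in SBV(\R^n)$ with $v=1$ on $\Omega$ follows from the calibration principle recalled in the paper, i.e. from \cite[Section 3]{Alberti}.

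I do not expect a genuine obstacle here: the calibration is dictated by the axioms and each verification is a one-line computation. The only mild points of care are the orientation bookkeeping needed to identify the measure-theoretic jump normal $\nu_u$ of $\mathbf 1_\Omega$ with the outward normal $\nu$, and the observation that ``approximately regular'' holds for our merely continuous (not $C^1$) field — so that the Gauss--Green argument of \cite{Alberti} on the complete graph $\Gamma_v$ still applies despite $\nu$ being assumed only continuous and divergence-free.
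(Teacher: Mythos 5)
Your proof is correct and follows the same route as the paper: take $\phi^x$ proportional to $t\,\nu(x)$, deduce $\phi^t\equiv 0$ from the divergence-free hypothesis, and reduce axiom (a) to $\beta\abs{\nu}\le\gamma$ and axiom (b) to $\abs{\phi^x}\le 2\beta t$. The only discrepancy is a sign: the paper's proof writes $\phi^x=-2\beta t\,\nu$ and $\int_0^1\phi^x\,\mathrm{d}t=-\beta\nu$, whereas your choice $\phi^x=+2\beta t\,\nu$ is the one consistent with the paper's stated orientation of $\nu_u$ (under which $\nu_u$ is the outward normal for $\mathbf{1}_{\Omega}$, since $u\to u^-=0$ on the outer side), so your version verifies axiom (b') literally; the sign is immaterial to the other three axioms, which involve only $\abs{\phi^x}$ or $\phi^x$ at $t=0$.
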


The proof of Theorem~\ref{thm_indicator1} is postponed in Section~\ref{section_idea1}.

\begin{rmk}
    The existence of such an extension holds true if $\Omega$ is a $C^2$ bounded open convex in $\R^2$ (\cite[Proposition 15]{C4}).
\end{rmk}

\begin{rmk}
    Such an extension calibrates $\Omega$ as an \emph{outward minimizing set}. This means that for all bounded set of finite perimeter $E \subset \R^n$ containing $\Omega$, we have $P(\Omega) \leq P(E)$.
Let us justify this claim. Let $\partial^* E$ denote the reduced boundary of $E$ and let $\nu_E$ denote the measure-theoretic outward normal vector field of $E$ (defined $\HH^{n-1}$ a.e.\ on $\partial^* E$).
We work in the ambient open set $X = \R^n \setminus \overline{\Omega}$ and we consider the set $F := E \setminus \overline{\Omega}$. In $X$, we observe that $F$ is of finite perimeter whose reduced boundary $\partial^* F$ coincides $\HH^{n-1}$-a.e. with $\partial^* E \setminus \partial \Omega$ and whose outward normal vector field $\nu_F$ coincides $\HH^{n-1}$-a.e. on $\partial^* F$ with $\nu_E$.
It is also easy to see that the trace of $u := \mathbf{1}_F$ on $\partial \Omega$ coincides $\HH^{n-1}$-a.e. with $\mathbf{1}_{\partial \Omega \setminus \partial^* E}$.

We apply the divergence theorem (\cite[Lemma 2.4]{Alberti}) in the domain $X$ and with the function $u = \mathbf{1}_{F}$. For all bounded approximately regular vector field $\phi$ on $\R^n \setminus \Omega$ with $\mathrm{div}(\phi) \in L^\infty(\R^n \setminus \overline{\Omega})$, we have
    \begin{equation}
        \int_{E \setminus \overline{\Omega}} \! \mathrm{div} (\phi) \dm x = \int_{\partial^* E \setminus \Omega} \! \phi \cdot \nu_E \dm \HH^{n-1} - \int_{\partial \Omega \setminus \partial^* E} \! \phi \cdot \nu \dm \HH^{n-1}.
    \end{equation}
    We pick $\phi = \nu$ and we get
    \begin{equation}
        \int_{\partial^* E \setminus \partial \Omega} \! (\nu \cdot \nu_E) \dm \HH^{n-1} = P(\Omega)
    \end{equation}
    but since $\abs{\nu} \leq 1$, we deduce that $P(\Omega) \leq P(E)$.

    Reciprocally, if $\Omega$ is outward miminizing, its outward unit normal vector field admits at least a \emph{weak extension} $\nu \in L^\infty(\R^n \setminus \overline{\Omega};\R^n)$ such that $\abs{\nu} \leq 1$ and which is divergence-free on $\R^n \setminus \overline{\Omega}$ in the sense of distribution.
    Here, \emph{weak extension} means that the normal component of $\nu$ on $\partial \Omega$ is defined in a functional sense (this notion is due to Anzellotti,~\cite{Anzellotti}).
    For the interested reader, we provide a proof of existence of $\nu$ communicated by A. Chambolle in Appendix~\ref{appendix_extension}.

    This weak extension $\nu$ may not be approximately regular. If we use it to build the calibration $\phi$ in Section~\ref{section_idea1}, $\phi$ may not be approximately regular either and its normal component may not be well-defined in the usual divergence theorem (\cite[Lemma 2.4]{Alberti}).
    However, for vector fields with $L^\infty$ distributional divergence (and in particular for divergence-free vectorfields), it is always possible to define the trace of the normal component in the sense of Anzellotti~\cite{Anzellotti} and the divergence theorem also holds true for this definition.
    The notion of calibration may thus be adapted to non approximately regular vector fields.
    It would be more difficult to check that a vector field implies minimality because we would no longer have pointwise requirements such as (a), (b), (a'), (b') in Definition~\ref{defi_isolation_calibration}.
    But the calibration that we present in Section~\ref{section_idea1} is quite simple and the competitor is just an indicator function.

    This hints that the condition $\beta \leq \gamma$ should be sufficient for the minimality of $\mathbf{1}_\Omega$, provided that $\Omega$ is a bounded open set of Lipschitz boundary which is outward minimizing.
    The proof should be a matter of rewriting things in the formalism of Anzellotti but we do not try in this paper which is quite long already.
\end{rmk}

Before stating the second theorem, we generalize the previous function $\Gamma$ to other domains $\Omega$.
The proof of the next lemma is postponed in Appendix~\ref{appendix_gamma} because this is not a new result.

\begin{lem}\label{lem_potential}
    Let $\Omega$ be a non-empty $C^2$ star-shaped bounded open set of $\R^n$.
    There exists a continuous function $\Gamma \colon \R^n \setminus \Omega \to \R$ such that
    \begin{enumerate}[label = (\roman*)]
        \item $\Gamma$ is harmonic positive on $\R^n \setminus \overline{\Omega}$;
        \item $\Gamma = 0$ on $\partial \Omega$,
        \item For all $x \in \R^n \setminus \overline{\Omega}$, $\nabla \Gamma(x) \ne 0$
        \item $\Gamma$ has a $C^1$ extension up to the boundary and for all $x \in \partial \Omega$, there exists $t > 0$ such that $\nabla \Gamma(x) = t \nu(x)$, where $\nu(x)$ is the outward unit normal vector of $\Omega$ at $x$.
    \end{enumerate}
\end{lem}

\begin{thm}\label{thm_indicator2}
    Let $\Omega$ be a non-empty $C^2$ star-shaped bounded open set of $\R^n$ and let $\Gamma$ be a function as in Lemma~\ref{lem_potential} (modulo a positive multiplicative constant).
    We define for $x \in \R^n \setminus \overline{\Omega}$,
    \begin{equation}
        \nu(x) = \frac{\nabla \Gamma}{\abs{\nabla \Gamma}} \qquad \text{and} \qquad \delta(x) = \frac{1}{1 + \beta \abs{\nabla \Gamma}^{-1} \Gamma}
    \end{equation}
    and we assume that $\mathrm{div}(\nu)$ is bounded.
    If for all $x \in \R^n \setminus \overline{\Omega}$,
    \begin{equation}\label{eq_indicator2_condition}
        \left(\beta^2 - \beta \mathrm{div}(\nu)\right) \delta^2 \leq \gamma^2,
    \end{equation}
    then $\mathbf{1}_\Omega$ has a calibration.
\end{thm}

In the case $\Omega = B(0,1)$, we have $\nu = e_r$ and the condition (\ref{eq_indicator2_condition}) just amounts to (\ref{eq_E_increasing}).
The proof of Theorem~\ref{thm_indicator2} is postponed in Section~\ref{section_idea2}.

Finally, we come back to $\Omega = B(0,1)$ and the notations of Section~\ref{section_informal}.
In particular, we refer to (\ref{eq_gamma}) and (\ref{eq_delta}) for the definition of $\Gamma$ and $\delta$.
We are going to see that if $\beta \geq n - \tfrac{1}{2}$, the Euler-Lagrange equations characterize minimizers.
In view of the formula
\begin{equation}
    E'(r) = n \omega_n r^{n-1} \left[\gamma^2 - \left(\beta^2 - \frac{(n - 1) \beta}{r}\right) \delta(r)^2\right],
\end{equation}
it suffices to show that the function
\begin{equation}
    r \mapsto \left(\beta^2 - \frac{(n - 1) \beta }{r}\right) \delta(r)^2
\end{equation}
is decreasing on $[1,+\infty[$.
We write
\begin{equation}
    \left(\beta^2 - \frac{(n - 1) \beta }{r}\right) \delta(r) = \beta \left(\beta - \frac{n-1}{r}\right) \frac{(r^{n-1}\delta(r))^2}{r^{2n - 2}}.
\end{equation}
If $\beta \geq n - 1$, the function $r \mapsto r^{n-1} \delta(r)$ is decreasing on $[1,+\infty[$ because
\begin{equation}
    (r^{n-1} \delta)' = - r^{n-1} \left(\beta^2 - \frac{\beta (n - 1)}{r}\right) \delta(r)^2.
\end{equation}
If $\beta \geq n - \tfrac{1}{2}$, the function $r \mapsto \left(\beta - \frac{n-1}{r}\right) \frac{1}{r^{2n - 2}}$ is decreasing on $[1,+\infty[$ because
\begin{equation}
    \left[\left(\beta - \frac{n-1}{r}\right) \frac{1}{r^{2n - 2}}\right]' = -2 \frac{(n - 1)}{r^{2n - 1}} \left(\beta - \frac{n - \tfrac{1}{2}}{r}\right)
\end{equation}
This proves our claim.
In the next theorem, we build a calibration corresponding to this criteria.



\begin{thm}\label{thm_u}
    Let $\Omega$ be the open ball $B(0,1)$.
    We assume that $\beta \geq n - \tfrac{1}{2}$ and that there exists $R \geq 1$ such that
    \begin{equation}
        \left(\beta^2 - \frac{\beta (n - 1)}{R}\right)\delta(R)^2 = \gamma^2,
    \end{equation}
    Then the function
    \begin{equation}\label{eq_u2}
        u(x) =
        \begin{cases}
            1                                     & \ \text{for $\abs{x} \leq 1$}        \\
            1 - \beta \delta(R) R^{n-1} \Gamma(x) & \ \text{for $1 \leq \abs{x} \leq R$} \\
            0                                     & \ \text{for $\abs{x} > R$},
        \end{cases}
    \end{equation}
    has a calibration.
\end{thm}

The proof of Theorem~\ref{thm_u} is postponed in Section~\ref{section_idea3}.

\subsection{Proof of Theorem~\ref{thm_indicator1}}\label{section_idea1}

Let $\Omega$ be a $C^1$ bounded open set of $\R^n$ (we will add more assumptions as we advance in the construction).
We want to build a calibration for $\mathbf{1}_\Omega$ and we search for a continuous function $\phi \colon (\R^n \setminus \Omega) \times [0,1] \to \R^n \times \R$ which is divergence-free in $\R^n \setminus \overline{\Omega}$ and such that
\begin{enumerate}
    \item[(a)] $\phi^t(x,t) \geq \tfrac{1}{4} \abs{\phi^x(x,t)}^2 - \gamma^2 \mathbf{1}_{\set{t > 0}}(t)$ for all $x \in \R^n \setminus \overline{\Omega}$ and $t \in [0,1]$;
    \item[(b)] $\abs{\int_r^s \! \phi^x(x,t) \dm t} \leq \beta (r^2 + s^2)$ for all $x \in \R^n \setminus \Omega$ and $r,s \in [0,1]$;
    \item[(a')] $\phi^x(x,0) = 0$ and $\phi^t(x,0) = 0$ for all $x \in \R^n \setminus \overline{\Omega}$;
    \item[(b')] $\int_{0}^{1} \! \phi^x(x,t) \dm t = -\beta \nu(x)$ for all $x \in \partial \Omega$, where $\nu$ is the outward unit normal vector field of $\Omega$.
\end{enumerate}
We look for $\phi^x$ first and then we will derive $\phi^t$.
The starting point is to define $\phi^x$ on the jump part of the complete graph of $\mathbf{1}_\Omega$ in such way that $t \mapsto \phi^x(x,t)$ is linear;
we define for $x \in \partial \Omega$ and $0 \leq t \leq 1$,
\begin{equation}
    \phi^x(x,t) = - 2 \beta t \nu(x).
\end{equation}
Let us assume that $\nu$ has a continuous extension on $\R^n \setminus \Omega$.
We can extend the previous formula and define for all $x \in \R^n \setminus \Omega$ and $0 \leq t \leq 1$,
\begin{equation}
    \phi^x(x,t) = -2 \beta t \nu(x).
\end{equation}
We have then for $x \in \R^n \setminus \Omega$ and for $r, s \in [0,1]$,
\begin{equation}
    \abs{\int_r^s \! \phi^x \dm t} \leq \beta (r^2 + s^2) \abs{\nu(x)}
\end{equation}
because $\abs{\phi^x(x,t)} \leq 2 \beta t \abs{\nu(x)}$.
The requirement (b) leads us to assume that $\abs{\nu} \leq 1$ in $\R^n \setminus \Omega$.
Next, we assume that $\nu$ has a $L^\infty$ distributional divergence in $\R^n \setminus \overline{\Omega}$.
We have then $\mathrm{div}(\phi) = 2 \beta t \mathrm{div}(\nu) + \partial_t \phi^t$ in the sense of distribution
so the conditions $\mathrm{div}(\phi) = 0$ and $\phi^t(x,0) = 0$ impose
\begin{equation}
    \phi^t(x,t) = \beta t^2 \mathrm{div}(\nu).
\end{equation}
The last requirement $\phi^t \geq \frac{1}{4} \abs{\phi^x}^2 - \gamma^2 \mathbf{1}_{\set{t > 0}}(t)$ amounts to
\begin{equation}
    \gamma^2 \geq \beta^2 \abs{\nu}^2 - \beta \mathrm{div}(\nu).
\end{equation}
In view of $\abs{\nu} \leq 1$, the simplest choice is to assume $\mathrm{div}(\nu) = 0$ and $\gamma \geq \beta$.

\subsection{Proof of Theorem~\ref{thm_indicator2}}\label{section_idea2}

Let $\Omega$ be a $C^1$ bounded open set of $\R^n$ (we will add more assumptions as we advance in the construction).
We are more careful than in the previous section and we define $\phi$ in two pieces.
However, we still arrange $\phi$ to be be globally continuous.

The principle is the same as before.
We let $\nu$ denote the outward unit normal vector field of $\Omega$.
The starting point is to define $\phi^x(x,t)$ for $x \in \partial \Omega$ and $0 \leq t \leq 1$ by $\phi^x(x,t) = -2 \beta t \nu(x)$.
We assume that $\nu$ has a continuous extension on $\R^n \setminus \Omega$ such that $\abs{\nu} \leq 1$ and $\nu$ is $C^1$ on $\R^n \setminus \overline{\Omega}$.
We also consider a continuous function $\delta \colon \R^n \setminus \Omega \to [0,1]$ which is $C^1$ on $\R^n \setminus \overline{\Omega}$ and such that $\delta = 1$ on $\partial \Omega$ and $0 < \delta < 1$ on $\R^n \setminus \overline{\Omega}$.
\begin{figure}[ht]
    \begin{center}
        \includegraphics[width = \linewidth]{./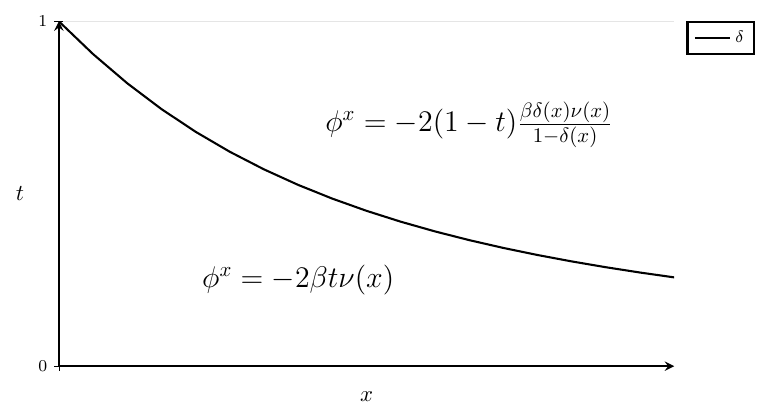}
    \end{center}
\end{figure}
We define for $x \in \R^n \setminus \overline{\Omega}$ and $0 \leq t \leq \delta(x)$,
\begin{align}
    \phi^x & = -2 \beta t \nu \\
    \phi^t & = \beta t^2 \mathrm{div}(\nu)
\end{align}
and for $\delta(x) < t \leq 1$,
\begin{align}
    \phi^x & = -2 (1 - t) \frac{\beta \delta \nu}{1 - \delta} \\
    \phi^t & = -(1 - t)^2 \mathrm{div}\left(\frac{\beta \delta \nu}{1 - \delta}\right) - C(x)
\end{align}
where $C(x)$ will be chosen so that $\phi$ is continuous on $(\R^n \setminus \overline{\Omega}) \times [0,1]$.
Observe that this is already the case for $\phi^x$.
We find
\begin{equation}
    C(x) = - (1 - \delta)^2 \mathrm{div}\left(\frac{\beta \delta \nu}{1 - \delta}\right) - \beta \delta^2 \mathrm{div}(\nu).
\end{equation}
However we compute
\begin{equation}\label{eq_div_computation}
    \mathrm{div}\left(\frac{\delta \nu}{1 - \delta}\right) = \frac{\mathrm{div}(\delta \nu) - \delta^2 \mathrm{div}(\nu)}{(1 - \delta)^2}
\end{equation}
so this simplifies to $C(x) = -\beta \mathrm{div}(\delta \nu)$.
With regard to approximate regularity, the definition of $\phi^t$ on $\partial \Omega \times [0,1]$ does not matter.
Indeed, let $M$ be an hypersurface of $\R^n \times \R$.
Then for $\HH^n$-a.e.\ $(x_0,t_0) \in M \cap (\partial \Omega \times [0,1])$, the vector $n_0 = (\nu(x_0),0)$ is a normal vector to $M$ at $x$ and $\phi \cdot n_0 = \phi^x$.
In order for $\phi$ to be bounded, it suffices that $\mathrm{div}(\nu)$ and $\mathrm{div}(\delta \nu)$ are bounded.
Finally, the requirement $\phi^t \geq \frac{1}{4} \abs{\phi^x}^2 - \gamma^2 \mathbf{1}_{\set{t > 0}}(t)$ amounts to
\begin{subequations}\label{eq_idea2_conditions}
    \begin{align}
        \left(\beta^2 \abs{\nu}^2 - \beta \mathrm{div}(\nu)\right) \delta^2 & \leq \gamma^2\label{eq_sigma1} \\
        - \beta \mathrm{div}(\delta \nu)                                    & \leq \gamma^2\label{eq_sigma2}.
    \end{align}
\end{subequations}
It is tempting to choose $\delta$ and $\nu$ in such a way that
\begin{equation}\label{eq_delta_nu_equality}
    \left(\beta^2 \abs{\nu}^2 - \beta \mathrm{div}(\nu)\right) \delta^2 = - \beta \mathrm{div}(\delta \nu).
\end{equation}
In that case, $\mathrm{div}(\delta \nu)$ is bounded provided that $\mathrm{div}(\nu)$ is bounded.
According to (\ref{eq_div_computation}), the equality (\ref{eq_delta_nu_equality}) is equivalent to
\begin{equation}
    \left(\frac{\beta \abs{\delta \nu}}{1 - \delta}\right)^2 = -\mathrm{div}\left(\frac{\beta \delta \nu}{1 - \delta}\right)
\end{equation}
A natural solution is to assume that $\Omega$ is $C^2$ star-shaped, to consider the function $\Gamma$ of Lemma~\ref{lem_potential} and to choose $\delta$, $\nu$ in such a way that
\begin{equation}
    \frac{\beta \delta \nu}{1 - \delta} = \frac{\nabla \Gamma}{\Gamma}.
\end{equation}
We suggest to define
\begin{equation}
    \nu(x) = \frac{\nabla \Gamma}{\abs{\nabla \Gamma}}
\end{equation}
and
\begin{equation}
    \delta(x) = \frac{1}{1 + \beta \abs{\nabla \Gamma}^{-1} \Gamma}.
\end{equation}
In conclusion, the conditions (\ref{eq_idea2_conditions}) simplify to
\begin{equation}
    \left(\beta^2 - \beta \mathrm{div}(\nu)\right) \delta^2 \leq \gamma^2.
\end{equation}

\subsection{Proof of Theorem~\ref{thm_u}}\label{section_idea3}

Let $\Omega$ be the open ball $B(0,1)$.
We assume that $\beta \geq n - \tfrac{1}{2}$ and that there exists $R \geq 1$ such that
\begin{equation}\label{eq_u_condition}
    \left(\beta^2 - \frac{\beta (n - 1)}{r}\right)\delta(r)^2 = \gamma^2,
\end{equation}
We have seen just before the statement of~\ref{thm_u} that when $\beta \geq n - \tfrac{1}{2}$, the function
\begin{equation}
    r \mapsto \left(\beta^2 - \frac{(n - 1) \beta}{r}\right) \delta(r)^2
\end{equation}
is decreasing on $[1,+\infty[$.
Therefore, (\ref{eq_u_condition}) implies that for all $r \geq R$,
\begin{equation}
    \left(\beta^2 - \frac{\beta (n - 1)}{r}\right)\delta(r)^2 \leq \gamma^2,
\end{equation}
The case $R = 1$ has already been dealt with in Theorem~\ref{thm_indicator2} so we can assume $R > 1$.

We recall the notations.
Given $x \in \mathbf{R}^n$, we write $r$ for $\abs{x}$.
We define for $r \geq 1$,
\begin{equation}\label{eq_gamma2}
    \Gamma(r) =
    \begin{cases}
        r - 1                       & \ \text{if $n = 1$} \\
        \ln(r)                      & \ \text{if $n = 2$} \\
        \frac{1}{n-2} (1 - r^{2-n}) & \ \text{if $n \geq 3$}.
    \end{cases}
\end{equation}
and
\begin{equation}
    \delta(r) = \frac{1}{1 + \beta r^{n-1} \Gamma(r)}.
\end{equation}
We define for $x \in \R^n$,
\begin{equation}
    u(x) =
    \begin{cases}
        1                                     & \ \text{for $\abs{x} \leq 1$}        \\
        1 - \beta \delta(R) R^{n-1} \Gamma(r) & \ \text{for $1 \leq \abs{x} \leq R$} \\
        0                                     & \ \text{for $\abs{x} > R$}.
    \end{cases}
\end{equation}
For $r \geq 0$, we write $u(r)$ for the value of $u$ on $\partial B_r$.
Thus, we consider $u$ as a function of the real variable $r \in [0,+\infty[$.
Now, we list a few useful formulas.
For $1 < \abs{x} < R$, we have
\begin{equation}\label{eq_nabla_u}
    \nabla u = - \beta \delta(R) \left(\tfrac{R}{r}\right)^{n-1} e_r
\end{equation}
and
\begin{equation}
    \frac{\nabla u}{1 - u} = - \frac{\beta \delta}{1 - \delta} e_r = - \frac{1}{r^{n-1} \Gamma(r)} e_r.
\end{equation}
With a slight abuse of notations, we consider that $\nabla u$ is defined on $1 \leq \abs{x} \leq R$ by (\ref{eq_nabla_u}).
We observe that
\begin{equation}
    \mathrm{div}\left(\frac{\nabla u}{1 - u}\right) = \left(\frac{\abs{\nabla u}}{1 - u}\right)^2.
\end{equation}
or equivalently
\begin{equation}
    -\mathrm{div}\left(\frac{\beta \delta e_r}{1 - \delta}\right) = \left(\frac{\beta \delta}{1 - \delta}\right)^2.\label{eq_div_delta2}
\end{equation}
According to (\ref{eq_div_computation}), the line (\ref{eq_div_delta2}) is also equivalent to
\begin{equation}
    -\beta\mathrm{div}(\delta e_r) = \left(\beta^2 - \frac{\beta (n - 1)}{r}\right)\delta(r)^2.
\end{equation}

We are going to define the calibration.
Although we define $\phi^x$ and $\phi^t$ in parallel, the relevant part is really $\phi^x$.
The function $\phi^t$ is derived as usual.
We consider a continuous function $\rho \colon [1,R] \to \R$ which is $C^1$ on $[1,R]$ and such that $\delta(R) \leq \rho \leq u$.
\begin{figure}[ht]
    \begin{center}
        \includegraphics[width = \linewidth]{./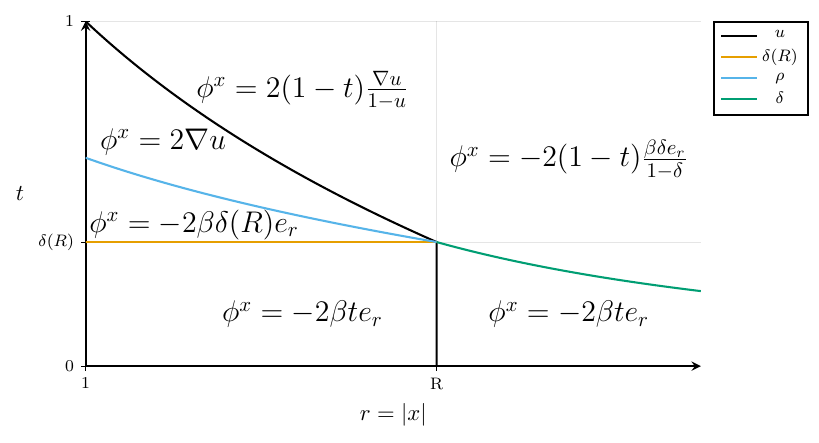}
    \end{center}
\end{figure}

We fix $x$ such that $1 \leq \abs{x} \leq R$.
We define for $0 \leq t \leq \delta(R)$,
\begin{align}
    \phi^x & = -2 \beta t e_r \\
    \phi^t & = \frac{(n - 1)\beta t^2}{r},
\end{align}
for $\delta(R) \leq t < \rho(r)$
\begin{align}
    \phi^x & = -2 \beta \delta(R) e_r \\
    \phi^t & = \frac{2 (n - 1) \beta \delta(R) t}{r} - \frac{(n - 1) \beta \delta(R)^2}{r}
\end{align}
for $\rho(r) < t \leq u(r)$
\begin{align}
    \phi^x(x,t) & = 2 \nabla u                                             \\
                & = -2 \beta \delta(R) \left(\tfrac{R}{r}\right)^{n-1} e_r \\
    \phi^t(x,t) & = \abs{\nabla u}^2 - \gamma^2                            \\
                & = \beta^2 \delta(R)^2 \left(\tfrac{R}{r}\right)^{2n - 2} - \gamma^2
\end{align}
and for $u(r) \leq t \leq 1$,
\begin{align}
    \phi^x(x,t) & = 2 (1 - t) \frac{\nabla u}{1 - u}                                 \\
                & = -2 (1 - t) \frac{\beta \delta e_r}{1 - \delta}                   \\
    \phi^t(x,t) & = (1 - t)^2 \left(\frac{\abs{\nabla u}}{1 - u}\right)^2 - \gamma^2 \\
                & = (1 - t)^2 \left(\frac{\beta \delta}{1 - \delta}\right)^2 - \gamma^2
\end{align}
Next, we fix $x$ such that $\abs{x} \geq R$ and we use the same formula as in Section~\ref{section_idea2}.
We define for $t \leq \delta(r)$,
\begin{align}
    \phi^x & = -2 \beta t e_r \\
    \phi^t & = \frac{(n - 1) \beta t^2}{r}
\end{align}
and for $t \geq \delta(r)$,
\begin{align}
    \phi^x(x,t) & = -2 (1 - t) \frac{\beta \delta e_r}{1 - \delta} \\
    \phi^t(x,t) & = (1 - t)^2 \left(\frac{\beta \delta}{1 - \delta}\right)^2 - \left(\beta^2 - \frac{\beta (n - 1)}{r}\right) \delta^2
\end{align}
When $n = 1$, the function $\rho$ is given by the constant $\delta(R)$.
When $n \geq 2$, it is given by the following complicated formula that the reader can ignore for the moment,
\begin{multline}
    \rho(r)  = \frac{\delta(R)}{2} + \frac{\beta \delta(R) r}{2} \left(\frac{\left(\tfrac{R}{r}\right)^{2n - 2} \Gamma\left(\tfrac{R}{r}\right)}{\left(\tfrac{R}{r}\right)^{n-1} - 1}\right) \\ - \frac{\delta(R) r}{2n}\left(\beta - \frac{n-1}{R}\right) \left(\frac{\left(\tfrac{R}{r}\right)^n - 1}{\left(\tfrac{R}{r}\right)^{n-1} - 1}\right).
\end{multline}
Regardless of $\rho$, many properties can be checked.
The vector field $\phi$ is bounded.
It is continuous outside the graph
\begin{equation}
    \set{(x,t) | t = \rho(r),\ 1 \leq r < R}.
\end{equation}
We point out that it is continuous through $\abs{x} = R$ because
\begin{equation}
    \gamma^2 = \left(\beta^2 - \frac{\beta (n - 1)}{R}\right) \delta(R)^2
\end{equation}
and because for $\abs{x} = R$, $\nabla u(x) = -\beta \delta(R) e_r$.
The function $\phi$ is divergence free in the interior of each part.
In order for $\phi$ to be divergence-free in $\R^n \setminus \overline{\Omega}$ (in the sense of distributions), we have to choose $\rho$ in such a way that for all $1 < \abs{x} < R$
\begin{equation}
    \phi(x,\rho(r)^-)
    \cdot
    \begin{pmatrix}
        - \rho'(r) e_r \\ 1
    \end{pmatrix}
    =
    \phi(x,\rho(r)^+)
    \cdot
    \begin{pmatrix}
        - \rho'(r) e_r \\ 1
    \end{pmatrix}.
\end{equation}
This will imply that $\phi$ is approximately regular on $(\R^n \setminus \overline{\Omega}) \times [0,1]$ by~\cite[Remark 2.6]{Alberti}.
Next, we are going to deal with the the approximately regularity of $\phi$ on on $\partial \Omega \times [0,1]$ as in the previous section.
Let $M$ be an hypersurface of $\R^n \times \R$.
For $\HH^n$-a.e.\ $(x_0,t_0) \in M \cap (\partial \Omega \times [0,1])$, we have $t_0 \ne \rho(x_0)$ and the vector $n_0 = (\nu(x_0),0)$ is a normal vector to $M$ at $x$.
Thus $\phi \cdot n_0 = \phi^x$ and we conclude by continuity of $\phi^x$ in an neighborhood of $(x_0,t_0)$.

We check the values of $\phi(x,u(x))$.
It is clear from the construction that for $1 < \abs{x} < R$,
\begin{align}
    \phi^x(x,u) & = 2 \nabla u \\
    \phi^t(x,u) & = \abs{\nabla u}^2 - \gamma^2,
\end{align}
that for $\abs{x} > R$,
\begin{align}
    \phi^x(x,u) & = 0 = 2 \nabla u \\
    \phi^t(x,u) & = 0 = \abs{\nabla u}^2
\end{align}
and that for $\abs{x} = R$,
\begin{equation}
    \int_0^{\delta(R)} \! \phi^x(x,t) \dm t = \beta \delta(R)^2.
\end{equation}
The requirement $\phi^t \geq \tfrac{1}{4}\abs{\phi^x}^2 - \gamma^2 \mathbf{1}_{\set{t > 0}}(t)$ holds true because for every $r \geq R$,
\begin{equation}
    \gamma^2 \geq \left(\beta^2 - \frac{\beta (n - 1)}{r}\right) \delta(r)^2.
\end{equation}
It is left to compute $\rho$ and to check that $\abs{\int_s^t \! \phi^x} \leq \beta (s^2 + t^2)$.
From now on, the proof is quite computational and the reader is free to skip.

We recall that we are looking for a continuous function $\rho \colon [1,R] \to [0,1]$ which is $C^1$ on $[1,R[$ and such that $\delta(R) \leq \rho \leq u$ and such that for $1 < \abs{x} < R$,
\begin{equation}\label{eq_phi_computations}
    \phi(x,\rho(r)^-)
    \cdot
    \begin{pmatrix}
        - \rho'(r) e_r \\ 1
    \end{pmatrix}
    =
    \phi(x,\rho(r)^+)
    \cdot
    \begin{pmatrix}
        -\rho'(r) e_r \\ 1
    \end{pmatrix}.
\end{equation}
When $n = 1$, we can take $\rho$ equals to the constant $\delta(R)$.
In this case, one can check that $\phi$ is continuous on $\left(\R^n \setminus \Omega\right) \times [0,1]$ and $\abs{\phi^x} \leq 2 \beta t$ so (\ref{eq_phi_computations}) and $\abs{\int_r^s \! \phi^x} \leq \beta (r^2 + s^2)$ hold true.
We pass to the case $n \geq 2$.
It will be convenient to express (\ref{eq_phi_computations}) in divergence form.
We compute
\begin{align}
    \phi(x,\rho^-)
    \cdot
    \begin{pmatrix}
        - \rho' e_r \\ 1
    \end{pmatrix}
                     & = 2 \beta \delta(R) \rho' + \frac{2 (n - 1) \beta \delta(R) \rho}{r} - \frac{(n - 1) \beta \delta(R)^2}{r} \\
                     & = 2 \beta \delta(R) \mathrm{div}(\rho e_r) - \beta \delta(R)^2 \mathrm{div}(e_r).
\end{align}
Next, we compute
\begin{align}
    \phi(x,\rho^+)
    \cdot
    \begin{pmatrix}
        -\rho' e_r \\ 1
    \end{pmatrix}
                    & = 2 \beta \delta(R) \left(\tfrac{R}{r}\right)^{n-1} \rho' + \beta^2 \delta(R)^2 \left(\tfrac{R}{r}\right)^{2n - 2} - \gamma^2 \\
                    \begin{split}
                    & = 2 \beta \delta(R) R^{n-1} \mathrm{div}\left(r^{1 - n} \rho e_r\right)                                                      \\
                    & \hspace{30pt} + \beta^2 \delta(R)^2 R^{2n - 2} \mathrm{div}\left(r^{1 - n} \Gamma(r) e_r\right) - \gamma^2
                    \end{split}
\end{align}
and we observe that we can write
\begin{equation}
    \gamma^2 = \mathrm{div}\left(\frac{\gamma^2 r}{n} e_r + \frac{c}{r^{n-1}} e_r\right)
\end{equation}
where $c$ is any real constant.
A natural solution is to choose $\rho$ such that
\begin{multline}
    2 \beta \delta(R) \rho - \beta \delta(R)^2 \\ = 2 \beta \delta(R) \left(\tfrac{R}{r}\right)^{n-1} \rho + \beta^2 \delta(R)^2 R^{n-1} \left(\tfrac{R}{r}\right)^{n-1} \Gamma(r) - \frac{\gamma^2 r}{n} + \frac{c}{r^{n-1}}.
\end{multline}
We rewrite this
\begin{multline}
    2 \beta \delta(R) \left(\left(\tfrac{R}{r}\right)^{n-1} - 1\right)\rho = -\beta \delta(R)^2 - \beta^2 \delta(R)^2 R^{n-1} \left(\tfrac{R}{r}\right)^{n-1} \Gamma(r) \\ + \frac{\gamma^2 r}{n} - \frac{c}{r^{n-1}}.
\end{multline}
We choose $c$ in such a way that the left-hand side cancels at $r = R$ (otherwise $\rho$ would have a singularity at $r = R$).
This yields
\begin{multline}
    2 \beta \delta(R) \left(\left(\tfrac{R}{r}\right)^{n-1} - 1\right)\rho = \beta \delta(R)^2 \left(\left(\tfrac{R}{r}\right)^{n-1} - 1\right) \\ + \beta^2 \delta(R)^2 R^{n-1} \left(\tfrac{R}{r}\right)^{n-1} (\Gamma(R) - \Gamma(r)) - \frac{\gamma^2 r}{n} \left(\left(\tfrac{R}{r}\right)^n - 1\right).
\end{multline}
Remember that $\Gamma(R) - \Gamma(r) = r^{2 - n} \Gamma\left(\tfrac{R}{r}\right)$ so
\begin{multline}
    2 \beta \delta(R) \left(\left(\tfrac{R}{r}\right)^{n-1} - 1\right)\rho = \beta \delta(R)^2 \left(\left(\tfrac{R}{r}\right)^{n-1} - 1\right) \\ + \beta^2 \delta(R)^2 r \left(\tfrac{R}{r}\right)^{2n - 2} \Gamma\left(\tfrac{R}{r}\right) - \frac{\gamma^2 r}{n} \left(\left(\tfrac{R}{r}\right)^n - 1\right).
\end{multline}
We conclude that
\begin{equation}
    \rho(r) = \frac{\delta(R)}{2} + \frac{\beta \delta(R) r}{2} \left(\frac{\left(\tfrac{R}{r}\right)^{2n - 2} \Gamma\left(\tfrac{R}{r}\right)}{\left(\tfrac{R}{r}\right)^{n-1} - 1}\right) - \frac{\gamma^2 r}{2n \beta \delta(R)} \left(\frac{\left(\tfrac{R}{r}\right)^n - 1}{\left(\tfrac{R}{r}\right)^{n-1} - 1}\right).
\end{equation}
As $\gamma^2 = \left(\beta^2 - \frac{\beta (n - 1)}{R}\right) \delta(R)^2$, an alternative expression is
\begin{multline}\label{eq_rho}
    \rho(r) = \frac{\delta(R)}{2} + \frac{\beta \delta(R) r}{2} \left(\frac{\left(\tfrac{R}{r}\right)^{2n - 2} \Gamma\left(\tfrac{R}{r}\right)}{\left(\tfrac{R}{r}\right)^{n-1} - 1}\right) \\ - \frac{\delta(R) r}{2n}\left(\beta - \frac{n-1}{R}\right) \left(\frac{\left(\tfrac{R}{r}\right)^n - 1}{\left(\tfrac{R}{r}\right)^{n-1} - 1}\right).
\end{multline}
It is clear that $\rho$ is a continuous function of $r \in ]1,R[$ and we also have $\lim_{r \to R} \rho(r) = \delta(R)$ because
\begin{equation}
    \lim_{t \to 1^+} \frac{\Gamma(t)}{t^{n-1} - 1} = \frac{1}{n-1}
\end{equation}
and
\begin{equation}
    \lim_{t \to 1^+} \frac{t^n - 1}{t^{n-1} - 1} = \frac{n}{n-1}.
\end{equation}

We show that $\rho \geq \delta(R)$.
It suffices to show that $\rho$ is decreasing on $[1,R[$.
For $0 \leq r < R$, we write $t = \tfrac{R}{r}$ so that
\begin{multline}
    \rho(r) = \frac{\delta(R)}{2} + \frac{\beta \delta(R) R t^{n - 2}}{2} \left(\frac{t^{n-1}\Gamma(t)}{t^{n-1} - 1}\right) \\ - \frac{\delta(R)}{2n}\left(\beta - \frac{n-1}{R}\right) t^{-1} \left(\frac{t^n - 1}{t^{n-1} - 1}\right).
\end{multline}
According to Lemma~\ref{lem_gamma}, the function $t \mapsto \left(\frac{t^{n-1}\Gamma(t)}{t^{n-1} - 1}\right)$ is increasing on $]1,+\infty[$.
It is also easy to see that the function $t^{-1} \left(\frac{t^n - 1}{t^{n-1} - 1}\right)$ is decreasing on $]1,+\infty[$ because
\begin{equation}
    t^{-1} \left(\frac{t^n - 1}{t^{n-1} - 1}\right) = 1 + t^{-1} \frac{t - 1}{t^{n-1} - 1}
\end{equation}
and $t \mapsto t^{n-1}$ is convex.
We deduce that $r \mapsto \rho(r)$ is decreasing on $[1,R[$.

Next, we show that for all $1 \leq r < R$, we have $\rho(r) \leq u(r)$.
Remember that
\begin{equation}
    u   = \delta(R) \left[1 + \beta r \left(\tfrac{R}{r}\right)^{n-1} \Gamma\left(\tfrac{R}{r}\right)\right]
\end{equation}
so we have to show that for all $1 \leq r < R$,
\begin{multline}
    \beta r \left(\frac{\left(\tfrac{R}{r}\right)^{2n - 2} \Gamma\left(\tfrac{R}{r}\right)}{\left(\tfrac{R}{r}\right)^{n-1} - 1}\right) \\ - \frac{r}{n}\left(\beta - \frac{n-1}{R}\right) \left(\frac{\left(\tfrac{R}{r}\right)^n - 1}{\left(\tfrac{R}{r}\right)^{n-1} - 1}\right) \leq 1 + 2 \beta r \left(\tfrac{R}{r}\right)^{n-1} \Gamma\left(\tfrac{R}{r}\right)
\end{multline}
We rewrite this,
\begin{multline}
    \beta r \left(\frac{\left(\tfrac{R}{r}\right)^{n-1} \Gamma\left(\tfrac{R}{r}\right)}{\left(\tfrac{R}{r}\right)^{n-1} - 1}\right) \left(2 - \left(\tfrac{R}{r}\right)^{n-1}\right) \\ - \frac{\beta r}{n} \left(\frac{\left(\tfrac{R}{r}\right)^n - 1}{\left(\tfrac{R}{r}\right)^{n-1} - 1}\right) \leq 1 - \frac{n-1}{n}\frac{r}{R} \left(\frac{\left(\tfrac{R}{r}\right)^n - 1}{\left(\tfrac{R}{r}\right)^{n-1} - 1}\right).
\end{multline}
It suffices to check that for all $t > 1$,
\begin{align}
    \beta \left(\frac{t^{n-1} \Gamma(t)}{t^{n-1} - 1}\right) \left(2 - t^{n-1}\right) - \frac{\beta}{n} \left(\frac{t^n - 1}{t^{n-1} - 1}\right) & \leq 0 \\
    1 -  \frac{n-1}{n} t^{-1} \left(\frac{t^n - 1}{t^{n-1} - 1}\right)                                                                            & \geq 0.
\end{align}
The first point can be simplified as
\begin{equation}
    \left(\frac{t^{n-1} \Gamma(t)}{t^{n} - 1}\right) \left(2 - t^{n-1}\right) \leq \frac{1}{n}
\end{equation}
and this follows from Lemma~\ref{lem_gamma}.
To prove the second point, we observe that $t \mapsto t^{-1} \left(\frac{t^n - 1}{t^{n-1} - 1}\right)$ is decreasing and that its limit when $t \to 1$ is $\frac{n}{n-1}$.

We finally prove that for all $\abs{x} \geq 1$ and for all $r,s \in [0,1]$, we have $\abs{\int_r^s \! \phi^x(x,t) \dm t} \leq \beta (r^2 + s^2)$.
Since $\phi^x = - \abs{\phi^x} e_r$, it amounts to show that for all $\abs{x} \geq 1$ and for all $s \in [0,1]$, we have
\begin{equation}\label{eq_int_phi_x}
    \int_0^s \! \abs{\phi^x(x,t)} \dm t \leq \beta s^2.
\end{equation}
If $\abs{x} \geq R$, inequality (\ref{eq_int_phi_x}) holds true for all $s \in [0,1]$ because we have $\abs{\phi^x(x,t)} \leq 2 \beta t$ for all $t \in [0,1]$.
Now, we fix $1 \leq \abs{x} < R$ and as usual, we write $r$ for $\abs{x}$.
Inequality (\ref{eq_int_phi_x}) holds true for all $s \in [0,\rho(r)]$ because we have $\abs{\phi^x(x,t)} \leq 2 \beta t$ for all $t \in [0,\rho(r)]$.
Next, we estimate for $t \in [\rho(r),1]$,
\begin{equation}
    \abs{\phi^x(x,t)} \leq 2 \abs{\nabla u(x)} = 2 \beta \delta(R) \left(\frac{R}{r}\right)^{n-1}
\end{equation}
whence for $s \in [\rho(r),1]$,
\begin{multline}
    \beta s^2 - \int_0^s \! \abs{\phi^x(x,t)} \dm t \geq \beta s^2 - \beta \delta(R)^2 \\ - 2 \beta \delta(R) (\rho(r) - \delta(R)) - 2 \beta \delta(R) \left(\frac{R}{r}\right)^{n-1} (s - \rho(r)).
\end{multline}
The right hand side function attains its minimum over $s \in \R$ at $s = \delta(R) \left(\frac{R}{r}\right)^{n-1}$ and its corresponding value is
\begin{equation}
    2 \beta \delta(R) \rho \left[\left(\frac{R}{r}\right)^{n-1} - 1\right] - \beta \delta(R)^2 \left[\left(\frac{R}{r}\right)^{2n - 2} - 1\right].
\end{equation}
It is non-negative if and only if
\begin{equation}
    \rho(r) \geq \frac{\delta(R)}{2} \left[\left(\frac{R}{r}\right)^{n-1} + 1\right].
\end{equation}
and given the formula (\ref{eq_rho}) of $\rho$, this means
\begin{multline}
    \beta r \left(\frac{\left(\tfrac{R}{r}\right)^{2n - 2} \Gamma\left(\tfrac{R}{r}\right)}{\left(\tfrac{R}{r}\right)^{n-1} - 1}\right) \\ - \frac{r}{n}\left(\beta - \frac{n-1}{R}\right) \left(\frac{\left(\tfrac{R}{r}\right)^n - 1}{\left(\tfrac{R}{r}\right)^{n-1} - 1}\right) \geq \left(\frac{R}{r}\right)^{n-1}.
\end{multline}
We rewrite this,
\begin{equation}\label{eq_gamma_estimate0}
    \beta r \left(\frac{\left(\tfrac{R}{r}\right)^{2n - 2} \Gamma\left(\tfrac{R}{r}\right) }{\left(\tfrac{R}{r}\right)^{n} - 1} - \frac{1}{n}\right) \geq \left(\frac{R}{r}\right)^{n-1} \left(\frac{\left(\tfrac{R}{r}\right)^{n-1} - 1}{\left(\tfrac{R}{r}\right)^{n} - 1}\right) - \frac{n-1}{n} \frac{r}{R}.
\end{equation}
We see first that the left-hand side is non-negative.
Indeed, for all $t \geq 1$,
\begin{equation}\label{eq_gamma_estimate00}
    \Gamma(t) \geq \frac{1}{n-1} \frac{t^{n-1} - 1}{t^{n-1}} \geq \frac{1}{n} \frac{t^n - 1}{t^n},
\end{equation}
where the first inequality comes from Lemma~\ref{lem_gamma} and the second one comes from the fact that whenever $\alpha > \beta > 0$ and $t \geq 1$,
\begin{equation}
    \frac{1}{\beta} t^{\alpha - \beta} (t^\beta - 1) \geq \frac{1}{\alpha} (t^\alpha - 1).
\end{equation}
Then, (\ref{eq_gamma_estimate0}) is equivalent to the fact that for all $t > 1$,
\begin{equation}\label{eq_gamma_estimate01}
    \beta \left(\frac{t^{2n - 2} \Gamma(t)}{t^n - 1} - \frac{1}{n}\right) \geq t^{n-1} \left(\frac{t^{n-1} - 1}{t^{n} - 1}\right) - \frac{n-1}{n} t^{-1}
\end{equation}
The inequality holds true if $\beta \geq n - \tfrac{1}{2}$ by the last point of Lemma~\ref{lem_gamma}.
In fact, it is necessary for $\beta$ to be greater than or equal to $n - \tfrac{1}{2}$ because dividing the left-hand side by $t - 1$ and taking the limit $t \to 1^+$ yields the value $\frac{n-1}{n} \beta$ whereas the same operation at the right-hand side yields $\frac{n-1}{n} \left(n - \tfrac{1}{2}\right)$.
We leave the details to the interested reader.

\begin{appendices}

    \section{The function \texorpdfstring{$\Gamma$}{}}\label{appendix_gamma}

    We recall and prove Lemma~\ref{lem_potential}.

    \begin{lem}
        Let $\Omega$ be a non-empty $C^2$ star-shaped bounded open set of $\R^n$.
        There exists a continuous function $\Gamma \colon \R^n \setminus \Omega \to \R$ such that
        \begin{enumerate}[label = (\roman*)]
            \item $\Gamma$ is harmonic positive on $\R^n \setminus \overline{\Omega}$;
            \item $\Gamma = 0$ on $\partial \Omega$,
            \item For all $x \in \R^n \setminus \overline{\Omega}$, $\nabla \Gamma(x) \ne 0$
            \item $\Gamma$ has a $C^1$ extension up to the boundary and for all $x \in \partial \Omega$, there exists $t > 0$ such that $\nabla \Gamma(x) = t \nu(x)$, where $\nu(x)$ is the outward unit normal vector of $\Omega$ at $x$.
        \end{enumerate}
    \end{lem}

    \begin{proof}
        Without loss of generality, we assume that $0 \in \Omega$ and that $\Omega$ is star-shaped with respect to $0$.
        We detail the case $n \geq 3$.
        According to~\cite[Section 3A, Theorem 3.40]{Fo}, there exists a unique function $p \in C(\R^n \setminus \Omega)$ such that
        \begin{enumerate}[label = (\roman*)]
            \item $p$ is harmonic on $\R^n \setminus \overline{\Omega}$,
            \item $p$ is harmonic at infinity,
            \item $p = 1$ on $\partial \Omega$.
        \end{enumerate}
        We refer to~\cite[Proposition 2.74]{Fo} for the characterisations of functions which are harmonic at infinity.
        Now we define $\Gamma(x) = 1 - p(x)$ and we review the properties of the lemma.
        It is clear that $\Gamma$ is harmonic on $\R^n \setminus \overline{\Omega}$ and $\Gamma = 0$ on $\partial \Omega$.
        As $p$ is harmonic at infinity, we have $\lim_{x \to \infty} p(x) = 0$ and thus $\lim_{x \to +\infty} \Gamma(x) = 1$.
        We can then apply the maximum principle to see that $\Gamma > 0$ on $\R^n \setminus \overline{\Omega}$.
        The function $\Gamma$ has a $C^1$ extension up to the boundary thanks to the usual regularity results for Dirichlet problems.
        As $\Gamma$ is constant on $\partial \Omega$, its tangential derivative is $0$ along $\partial \Omega$.
        And according to the Hopf Lemma, the normal derivative (with respect to the outward normal vector) is $> 0$ along $\partial \Omega$.
        This proves that for all $x \in \partial \Omega$, there exists $t > 0$ such that $\nabla \Gamma(x) = t \nu(x)$.
        The fact that $\nabla \Gamma$ never vanishes comes from the fact that $\Omega$ is star-shaped.
        Indeed, the function $w \colon x \mapsto x \cdot \nabla \Gamma(x)$ is harmonic on $\R^n \setminus \overline{\Omega}$ and $\geq 0$ on $\partial \Omega$.
        In addition, we see that $\lim_{x \to +\infty} w(x) = 0$ by applying~\cite[Proposition 2.75]{Fo} to the function $p$.
        We can use the maximum principle to conclude that $w > 0$ on $\R^n \setminus \Omega$

        In the case $n = 2$, we define $p$ as the unique function $p \in C(\R^n \setminus \Omega)$ such that
        \begin{enumerate}[label = (\roman*)]
            \item $p$ is harmonic on $\R^n \setminus \overline{\Omega}$,
            \item $p$ is harmonic at infinity,
            \item $p(x) = \ln(\abs{x})$ on $\partial \Omega$.
        \end{enumerate}Then we define $\Gamma = \ln(\abs{x}) - p$.
        As $p$ is harmonic at infinity, it is bounded at infinity and thus $\lim_{x \to +\infty} \Gamma(x) = +\infty$.
        The rest of the proof is the same except that $\lim_{x \to +\infty} x \cdot \nabla \Gamma(x) = 1$.
        The case $n = 1$ is trivial.
    \end{proof}

    In the case of the unit ball $\Omega = B(0,1)$, the function $\Gamma$ is given by
    \begin{equation}\label{eq_gamma3}
        \Gamma(r) =
        \begin{cases}
            r - 1                      & \ \text{if $n = 1$} \\
            \ln(r)                     & \ \text{if $n = 2$} \\
            \frac{1}{n-2}(1 - r^{2-n}) & \ \text{if $n \geq 3$}.
        \end{cases}
    \end{equation}

    We isolate a few useful estimates about this function in the following lemma.
    The reader is free to skip the proof.

    \begin{lem}\label{lem_gamma}
        Let $n$ be an integer $\geq 2$ and let $\Gamma$ be defined as in (\ref{eq_gamma3}).
        \begin{enumerate}[label = (\roman*)]
            \item For all $s, t \geq 1$, $\Gamma(t) - \Gamma(s) = s^{2 - n} \Gamma\left(\frac{t}{s}\right)$.
            \item The function
                \begin{equation}
                    t \mapsto \frac{t^{n-1} \Gamma(t)}{t^{n-1} - 1}
                \end{equation}
                is increasing on $]1,+\infty[$.
            \item For all $t \geq 1$,
                \begin{equation}\label{eq_gamma_estimate}
                    \frac{1}{n-1} \frac{t^{n-1} - 1}{t^{n-1}} \leq \Gamma(t) \leq \frac{1}{n} \frac{t^n - 1}{t^{n-1}}.
                \end{equation}
            \item For all $t > 1$,
                \begin{equation}\label{eq_gamma_estimate2}
                    \left(n - \tfrac{1}{2}\right) \left(\frac{t^{2n - 2} \Gamma(t)}{t^n - 1} - \frac{1}{n}\right) \geq t^{n-1} \left(\frac{t^{n-1} - 1}{t^{n} - 1}\right) - \frac{n-1}{n} t^{-1}
                \end{equation}
        \end{enumerate}
    \end{lem}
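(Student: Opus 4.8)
The plan is to work throughout with the integral representation $\Gamma(t) = \int_1^t \tau^{1-n}\dm\tau$, which one checks against the three cases of (\ref{def_gamma3}) and which is valid for every $n \geq 1$; it gives at once $\Gamma(1) = 0$, $\Gamma'(t) = t^{1-n}$, and the fact that $\Gamma$ is non-negative and non-decreasing on $[1,+\infty[$. Point (i) is then simply the change of variables $\tau = s\sigma$, namely $\Gamma(t) - \Gamma(s) = \int_s^t \tau^{1-n}\dm\tau = s^{2-n}\int_1^{t/s}\sigma^{1-n}\dm\sigma = s^{2-n}\Gamma(t/s)$, with the middle step requiring only $s,t \geq 1$.

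For point (iii) the lower bound is immediate, since $\tau^{1-n} \geq \tau^{-n}$ on $[1,t]$ yields $\Gamma(t) \geq \int_1^t \tau^{-n}\dm\tau = \tfrac{1}{n-1}\tfrac{t^{n-1}-1}{t^{n-1}}$. For the upper bound I would put $g(t) = \tfrac1n \tfrac{t^n-1}{t^{n-1}} - \Gamma(t) = \tfrac{t - t^{1-n}}{n} - \Gamma(t)$; then $g(1) = 0$, and $t^n g'(t) = \tfrac{t^n}{n} + \tfrac{n-1}{n} - t$ vanishes at $t=1$ with derivative $t^{n-1} - 1 \geq 0$, so $g' \geq 0$ and hence $g \geq 0$ on $[1,+\infty[$. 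For point (ii), differentiating and using $\Gamma'(t) = t^{1-n}$ shows that $\tfrac{\dm}{\dm t}\tfrac{t^{n-1}\Gamma(t)}{t^{n-1}-1}$ has the same sign as $t^{n-1} - 1 - (n-1)t^{n-2}\Gamma(t)$; this is $\geq 0$ because $t \mapsto \tfrac{t^{n-1}-1}{(n-1)t^{n-2}} - \Gamma(t)$ vanishes at $t=1$ and has derivative $\tfrac{1 - t^{1-n}}{n-1} \geq 0$.

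The real work is (iv). Multiplying the inequality by $n(t^n-1) > 0$, it becomes $(2n-1)G(t) \geq 2H(t)$, where $G(t) = n t^{2n-2}\Gamma(t) - (t^n-1)$ and $H(t) = n t^{n-1}(t^{n-1}-1) - (n-1)t^{-1}(t^n-1)$; since $G(1) = H(1) = 0$, it is enough to prove this on $[1,+\infty[$ and then divide back by $2n(t^n-1)$. The key observation is the cancellation $G'(t) = 2n(n-1)t^{2n-3}\Gamma(t)$: the terms $n t^{2n-2}\Gamma'(t) = n t^{n-1}$ and $-\tfrac{\dm}{\dm t}(t^n-1) = -n t^{n-1}$ annihilate each other because $\Gamma'(t) = t^{1-n}$. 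Hence it suffices to prove $(2n-1)G'(t) - 2H'(t) \geq 0$ on $[1,+\infty[$ and integrate from $1$. Writing out $H'$ and dividing by $2(n-1)t^{2n-3} > 0$, this reduces to $f(t) := n(2n-1)\Gamma(t) + (2n-1)t^{1-n} + t^{1-2n} \geq 2n$ on $[1,+\infty[$. Finally $f(1) = 2n$, and $t^{2n}f'(t) = (2n-1)\bigl(n t^{n+1} - (n-1)t^n - 1\bigr) = (2n-1)q(t)$ with $q(1) = 0$ and $q'(t) = n t^{n-1}\bigl((n+1)t - (n-1)\bigr) > 0$ for $t \geq 1$, so $q \geq 0$, hence $f' \geq 0$, hence $f \geq f(1) = 2n$.

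I expect the main obstacle to be precisely this reformulation. A direct attack on (iv) leads to an awkward polynomial inequality of degree $2n-1$ when $n \geq 3$ and to one still containing $\ln t$ when $n = 2$; the gain is that, after passing to $G$, $H$ and using $\Gamma'(t) = t^{1-n}$, everything collapses to a short chain of one-variable monotonicity statements valid uniformly in $n \geq 2$. That $(2n-1)G - 2H$, together with $f$ and $q$, vanishes at $t = 1$ is consistent with the sharpness of the constant $n - \tfrac12$ noted after (\ref{eq_gamma_estimate01}).
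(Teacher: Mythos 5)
Your proof is correct, and its overall strategy --- pivot everything on the vanishing at $t=1$ and reduce to monotonicity --- matches the paper's, but the execution differs in two ways worth noting. First, you work entirely from the integral representation $\Gamma(t)=\int_1^t \tau^{1-n}\dm\tau$ (equivalently $\Gamma'(t)=t^{1-n}$), which lets you treat (ii) and (iii) uniformly in $n\geq 2$, whereas the paper splits each of these between $n=2$ (concavity of $\ln t$, convexity of $t\ln t$) and $n\geq 3$ (convexity of $t^{n-1}$, the inequality $\beta^{-1}(t^\beta-1)\leq\alpha^{-1}(t^\alpha-1)$); your one-line lower bound $\Gamma(t)\geq\int_1^t\tau^{-n}\dm\tau$ in (iii) also replaces the paper's deduction of that bound as a by-product of (ii) via a limit. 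Second, for (iv) the paper differentiates the reformulated inequality once and then closes by substituting the lower bound from (iii), so part (iv) logically depends on part (iii); you instead differentiate twice, exploiting the cancellation $G'(t)=2n(n-1)t^{2n-3}\Gamma(t)$, and bottom out at the self-contained polynomial inequality $q(t)=nt^{n+1}-(n-1)t^n-1\geq 0$ with $q(1)=0$, $q'>0$. Your route for (iv) is thus independent of (iii) and avoids case splits, at the cost of one extra derivative; both are clean. (Incidentally, your computation exposes a harmless typo in the paper's intermediate display: the constant term in the reformulation of (iv) should be $(n-1)$, not $\tfrac12(n-1)$, in order for both sides to vanish at $t=1$ as the paper then asserts; the subsequent derivative identity is unaffected.)
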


    \begin{proof}
        The first point is easy and we pass directly to the second one.
        If $n = 2$, the function
        \begin{equation}
            t \mapsto \frac{t^{n-1} \Gamma(t)}{t^{n-1} - 1} = \frac{t \ln(t)}{t - 1}
        \end{equation}
        is increasing because $t \mapsto t \ln(t)$ is convex.
        If $n \geq 3$, the function
        \begin{equation}
            t \mapsto \frac{t^{n-1} \Gamma(t)}{t^{n-1} - 1} = \frac{1}{n - 2} \frac{t^{n-1} - t}{t^{n-1} - 1}
        \end{equation}
        is increasing because
        \begin{equation}
            \frac{t^{n-1} - t}{t^{n-1} - 1} = 1 - \frac{t - 1}{t^{n-1} - 1}
        \end{equation}
        and $t \mapsto t^{n-1}$ is convex.
        Since $\lim_{t \to 1} \frac{t^{n-1} \Gamma(t)}{t^{n-1} - 1} = \frac{1}{n-1}$,
        we also deduce that for all $t \geq 1$,
        \begin{equation}
            \Gamma(t) \geq \frac{1}{n-1} \frac{t^{n-1} - 1}{t^{n-1}}.
        \end{equation}
        Next, we prove that for all $t \geq 1$,
        \begin{equation}
            \Gamma(t) \leq \frac{1}{n} \frac{t^n - 1}{t^{n-1}}.
        \end{equation}
        We rewrite this condition
        \begin{equation}
            n t^{n-1} \Gamma(t) \leq t^n - 1
        \end{equation}
        and since both sides equals $0$ at $t = 1$, it suffices to show that the derivative of the left-hand side is greater than or equal to the derivative of the left-hand side on $[1,+\infty[$.
        We are thus led to show that for all $t \geq 1$,
        \begin{equation}
            n (n - 1) t^{n - 2} \Gamma(t) + n \leq n t^{n-1},
        \end{equation}
        that is
        \begin{equation}
            \Gamma(t) \leq \frac{1}{n-1} \frac{t^{n-1} - 1}{t^{n - 2}}.
        \end{equation}
        When $n = 2$, this comes from the concavity of $t \mapsto \ln(t)$.
        When $n \geq 3$, this comes from the fact that whenever $\alpha > \beta > 0$ and $t \geq 1$,
        \begin{equation}
            \beta^{-1} (t^\beta - 1) \leq \alpha^{-1} (t^\alpha - 1).
        \end{equation}
        We finally show that for all $t > 1$,
        \begin{equation}\label{eq_gamma_estimate3}
            \left(n - \tfrac{1}{2}\right) \left(\frac{t^{2n - 2} \Gamma(t)}{t^n - 1} - \frac{1}{n}\right) \geq t^{n-1} \left(\frac{t^{n-1} - 1}{t^{n} - 1}\right) - \frac{n-1}{n} t^{-1}.
        \end{equation}
        We isolate $\Gamma$ in (\ref{eq_gamma_estimate3}) and we obtain the equivalent condition
        \begin{equation}
            n \left(n - \tfrac{1}{2}\right) t^{2n - 1} \Gamma(t) \geq n t^{2n - 1} + \left(n - \tfrac{1}{2}\right) \left(t^{n+1} - 2t^n - t\right) + \tfrac{1}{2} (n - 1).
        \end{equation}
        Since both sides equals $0$ at $t = 1$, it suffices to show that the derivative of the left-hand side is greater than or equal to the derivative of the left-hand side on $[1,+\infty[$.
        This condition simplifies to the fact that for all $t \geq 1$,
        \begin{equation}\label{eq_gamma_computations}
            n \left(n - \tfrac{1}{2}\right) t^{2n - 2} \Gamma(t) \geq n t^{n-1} (t^{n-1} - 1) + \tfrac{1}{2} (t^n - 1).
        \end{equation}
        The condition (\ref{eq_gamma_computations}) holds true when we replace $\Gamma(t)$ by the lower bound
        \begin{equation}
            \Gamma(t) \geq \frac{1}{n-1} \frac{t^{n-1} - 1}{t^{n-1}}.
        \end{equation}
        Indeed, the new inequality simplifies to the fact that for all $t \geq 1$,
        \begin{equation}
            \frac{1}{n-1} t^{n-1} (t^{n-1} - 1) \geq \frac{1}{n} (t^n - 1)
        \end{equation}
        and this comes from the fact that whenever $\alpha > \beta > 0$ and $t \geq 1$,
        \begin{equation}
            \frac{1}{\beta} t^{\alpha - \beta} (t^\beta - 1) \geq \frac{1}{\alpha} (t^\alpha - 1).
        \end{equation}
    \end{proof}

    %
    %

    \section{Calibrations}\label{appendix_calibration}

    The goal of this section is to recall the main results and definitions of~\cite{Alberti} which explain why the existence of a vectorfield as in Definitions~\ref{defi_harmonic_calibration} and~\ref{defi_isolation_calibration} implies the minimality of $u$.

    Let $X$ be an open set of $\R^n$ and let $u \in SBV(X)$.
    We let $E_u$ denote the \emph{subgraph} of $u$
    \begin{equation}
        E_u = \set{(x,t) \in X \times \R | t \leq u(x)}.
    \end{equation}
    It has a locally finite perimeter in $X \times \R$.
    We define the \emph{complete graph} of $u$, written $\Gamma_u$, as the measure-theoretic boundary of $E_u$.
    According to the usual structure theorem, $D \mathbf{1}_{E_u} = \nu_{\Gamma_u} \HH^{n-1} \mres \Gamma_u$ where $\nu_{\Gamma_u} \in \R^n \times \R$ is the measure-theoretic inward normal to $E_u$. This notation should not be confused with $\nu_u \in \R^n$ which is defined on $J_u$.

    We will rely on~\cite[Lemma 2.10, Lemma 2.12]{Alberti}, summarized in Lemma~\ref{lem_calibration_tools} below.
    We refer to~\cite[Definition 2.1]{Alberti} for the definition of an approximately regular vector field.
    The reader can think of it as a piecewise continuous vector field $\phi$ whose normal component does not jump through the discontinuity, except on a $\HH^{n-1}$-negligible set.
    The point of this property is to give a pointwise meaning (except on a $\HH^{n-1}$-negligible set) to the "normal component of $\phi$" in the divergence theorem.
    We are more specifically interested in bounded approximately regular vector fields which are divergence-free in the sense of distribution.
    The main example is a piecewise smooth vector field which is divergence-free on each separate piece and whose normal component does not jump through the discontinuity.
    See~\cite[Lemma 2.6 and Remark 2.8]{Alberti} for a more formal statement.

    \begin{lem}\label{lem_calibration_tools}
        Let $X$ be an open set of $\R^n$, let $u \in SBV(X)$.
        \begin{enumerate}
            \item For any Borel map $\phi \colon X \times \R \to \R^n \times \R$, we have
                \begin{multline}\label{eq_lem_calibration_tool1}
                    \int_{X \times \R} \! \phi \cdot Du = \int_X \! \phi^x(x,u) \cdot \nabla u(x) - \phi^t(x,u) \dm x \\ + \int_{J_u} \int_{u^-}^{u^+} \! \phi^x(x,t) \cdot \nu_u(x) \dm t \dm \HH^{n-1}(x),
                \end{multline}
                provided that the right-hand side is well-defined (it suffices that $\phi$ is bounded and $x \mapsto \phi^t(x,u(x))$ has a bounded support).
            \item We assume that $X$ is a bounded open set of Lipschitz boundary and we let $\nu_X$ denote its inner normal vector field (defined $\HH^{n-1}$-a.e.\ on $\partial X$).
                Let $m \leq M$ be two real numbers, let $\phi \colon X \times [m,M] \to \R^n \times \R$ be a Borel map which is bounded, approximately regular in $X \times [m,M]$ and divergence-free on $X \times ]m,M[$ in the sense of distribution.
                Then for all $u, v \in SBV(X)$ such that $m \leq u,v \leq M$,
                \begin{multline}
                    \int_{\Gamma_u} \! \phi \cdot \nu_{\Gamma_u} \dm \HH^{n-1} - \int_{\Gamma_v} \! \phi \cdot \nu_{E_v} \dm \HH^{n-1} \\ = \int_{\partial X} \int_{u^*}^{v^*} \! \phi^x(x,t) \cdot \nu_X(x) \dm t \dm \HH^{n-1}(x),
                \end{multline}
                where $u^*$, $v^*$ are the traces of $u$ and $v$ on $\partial X$.
        \end{enumerate}
    \end{lem}

    The general principle of calibrations for free-discontinuity problems is as follow.
    Let us say that we minimize an energy $E(u)$ over functions $u \in SBV(X)$.
    Given a candidate minimizer $u \in SBV(X)$, we expect a calibration for $u$ to satisfy the following properties: for all competitor $v$ of $u$, we have
    \begin{subequations}\label{eq_calibration_principles}
        \begin{equation}
            \int_{\Gamma_v} \! \phi \cdot \nu_{\Gamma_v} \dm \HH^{n-1} = \int_{\Gamma_u} \! \phi \cdot \nu_{\Gamma_u} \dm \HH^{n-1}
        \end{equation}
        and
        \begin{equation}
            \int_{\Gamma_v} \! \phi \cdot \nu_{\Gamma_v} \dm \HH^{n-1} \leq E(v)
        \end{equation}
        with equality when $v = u$.
    \end{subequations}
    This clearly implies the minimality of $u$.

    We consider $A$, $V$, $u$, $M$, $\phi$ as in Definition~\ref{defi_harmonic_calibration}.
    We extend $\phi$ by $\phi(x,t) = 0$ outside $\overline{A} \times [m,M]$.
    The extension may not be approximately regular and divergence-free outside $\overline{A} \times [m,M]$ but it does not matter.
    We check the two conditions (\ref{eq_calibration_principles}) which in our case means $X = V$ and for all competitor $v \in SBV(V)$ such that $0 \leq v \leq M$ and $v = u$ in $V \setminus \overline{A}$, we have
    \begin{subequations}
        \begin{equation}
            \int_{\Gamma_v \cap \overline{A}} \! \phi \cdot \nu_{\Gamma_v} \dm \HH^{n-1} = \int_{\Gamma_u \cap \overline{A}} \! \phi \cdot \nu_{\Gamma_u} \dm \HH^{n-1}\label{eq_calibration_goal1}
        \end{equation}
        and
        \begin{equation}
            \int_{\Gamma_v \cap \overline{A}} \! \phi \cdot \nu_{\Gamma_v} \dm \HH^{n-1} \leq E_0(v,\overline{A}),\label{eq_calibration_goal2}
        \end{equation}
        with equality when $v = u$.
    \end{subequations}
    Let us start with (\ref{eq_calibration_goal2}).
    We apply the first item of Lemma~\ref{lem_calibration_tools} with $X = V$.
    We have thus for all $v \in SBV(V)$ such that $0 \leq v \leq M$,
    \begin{multline}\label{eq_calibration_step1}
        \int_{\Gamma_v \cap \overline{A}} \! \phi \cdot \nu_{\Gamma_v} \dm \HH^{n-1} = \int_A \! \phi^x(x,v) \cdot \nabla v(x) - \phi^t(x,v) \dm x \\ + \int_{J_v \cap \overline{A}} \int_{v^-}^{v^+} \! \phi^x(x,t) \cdot \nu_v(x) \dm t \dm \HH^{n-1}(x).
    \end{multline}
    The requirements (a) and (b) allow to bound the right-hand side of (\ref{eq_calibration_step1}) and to obtain
    \begin{equation}\label{eq_calibration_step2}
        \int_{\Gamma_v \cap \overline{A}} \! \phi \cdot \nu_{\Gamma_v} \dm \HH^{n-1} \leq \int_A \! \abs{\nabla v}^2 \dm x + \beta \int_{J_v \cap \overline{A}} \! (v^+)^2 + (v^-)^2 \dm \HH^{n-1},
    \end{equation}
    with equality for $v = u$.
    We have proved (\ref{eq_calibration_goal2}).

    \begin{rmk}\label{rmk_calibration_equality}
        One can also see, using the requirements (a) and (b), that there is equality in (\ref{eq_calibration_step2}) if and only if
        \begin{enumerate}
            \item[(a')] $\phi^x(x,v) = 2 \nabla v$ and $\phi^t(x,v) = \abs{\nabla v}^2$ for $\LL^n$-a.e.\ $x \in A$,
            \item[(b')] $\int_{v^-}^{v^+} \! \phi^x(x,t) \dm t = \beta \left[(v^-)^2 + (v^+)^2\right]\nu_v$ for $\HH^{n-1}$-a.e.\ $x \in J_v \cap \overline{A}$.
        \end{enumerate}
        We deduce that (a'), (b') holds true not only for $v = u$ but for all minimizers, that is for all competitors $v$ such that $E_0(v,\overline{A}) = E_0(u,\overline{A})$.
    \end{rmk}

    We pass to (\ref{eq_calibration_goal1}).
    We apply the second item of Lemma~\ref{lem_calibration_tools} with $X = A$.
    For all $v \in SBV(V)$ such that $0 \leq v \leq M$, we have
    \begin{multline}\label{eq_calibration_step3}
        \int_{\Gamma_u \cap A} \! \phi \cdot \nu_{\Gamma_u} \dm \HH^{n-1} = \int_{\Gamma_v \cap A} \! \phi \cdot \nu_{\Gamma_v} \dm \HH^{n-1} \\ + \int_{\partial A} \int_{u^*}^{v^*} \! \phi^x(x,t) \cdot \nu_A(x) \dm t \dm \HH^{n-1}(x),
    \end{multline}
    where $u^*$ and $v^*$ are the traces of $u\vert_A$ and $v\vert_A$ on $\partial A$.
    Using the fact that $A$ has a Lipschitz boundary and assuming $v = u$ in $V \setminus \overline{A}$, we are going to show that
    \begin{equation}\label{eq_uv_star_pm}
        \int_{u^*}^{v^*} \! \phi^x(x,t) \cdot \nu_A(x) \dm t = \int_{v^-}^{v^+} \! \phi^x(x,t) \cdot \nu_v(x) \dm t - \int_{u^+}^{u^-} \! \phi^x(x,t) \cdot \nu_u \dm t.
    \end{equation}
    For $\HH^{n-1}$-a.e.\ $x \in A$, the function $v$ has a trace at each side of $\partial A$; the inner trace $v^*$ that we have already defined and an outer trace, that we shall write $v_*$.
    Moreover, for $\HH^{n-1}$-a.e.\ $x \in \partial A$, we have the following alternative.
    Either $x$ is a Lebesgue point of $v$ and $v^+(x) = v^-(x) = v^*(x) = v_*(x)$, or $x$ is a jump point of $A$ with $\nu_v(x) = \pm \nu_A(x)$, $v^+(x) = \max(v^*(x),v_*(x))$ and $v^- = \min(v^*(x),v_*(x))$.
    The vector $\nu_v$ points toward the higher trace of $v$ and $\nu_A$ is an inner normal to $A$ so the case $\nu_v(x) = \nu_A(x)$ corresponds to the matching $v^+(x) = v^*(x)$, $v^-(x) = v_*(x)$ and the case $\nu_v(x) = - \nu_A(x)$ corresponds to the matching $v^+(x) = v_*(x)$, $v^-(x) = v^*(x)$.
    This proves that
    \begin{equation}\label{eq_v_star_pm}
        \int_{v_*}^{v^*} \! \phi^x(x,t) \cdot \nu_A(x) \dm t = \int_{v^-}^{v^+} \! \phi^x(x,t) \cdot \nu_v(x) \dm t.
    \end{equation}
    We have similarly,
    \begin{equation}\label{eq_u_star_pm}
        \int_{u_*}^{u^*} \! \phi^x(x,t) \cdot \nu_A(x) \dm t = \int_{u^-}^{u^+} \! \phi^x(x,t) \cdot \nu_u(x) \dm t.
    \end{equation}
    And since $u = v$ in $V \setminus A$, we have $u_*(x) = v_*(x)$ for $\HH^{n-1}$-a.e.\ $x \in \partial A$.
    Now, it suffices to substract (\ref{eq_u_star_pm}) from (\ref{eq_v_star_pm}) to obtain (\ref{eq_uv_star_pm}).
    The equality (\ref{eq_calibration_step3}) can be rewritten
    \begin{multline}\label{eq_calibration_step4}
        \int_{\Gamma_u \cap A} \! \phi \cdot \nu_{\Gamma_u} \dm \HH^{n-1} + \int_{J_u \cap \partial A} \int_{u^-}^{u^+} \! \phi^x(x,t) \cdot \nu_u(x) \dm t \dm \HH^{n-1}(x) \\ = \int_{\Gamma_v \cap A} \! \phi \cdot \nu_{\Gamma_v} \dm \HH^{n-1} + \int_{J_v \cap \partial A} \int_{v^-}^{v^+} \! \phi^x(x,t) \cdot \nu_v(x) \dm t \dm \HH^{n-1}(x)
    \end{multline}
    Apply (\ref{eq_lem_calibration_tool1}) with $X = A$ to develop $\int_{\Gamma_u \cap A} \! \phi \cdot \nu_{\Gamma_u} \dm \HH^{n-1}$. In view of (\ref{eq_calibration_step1}), we recognize that the left-hand side of (\ref{eq_calibration_step4}) is $\int_{\Gamma_u \cap \overline{A}} \! \phi \cdot \nu_{\Gamma_u} \dm \HH^{n-1}$. We reason similarly with the right-hand side and  we have proved (\ref{eq_calibration_goal1}).

    We would proceed in the same way with Definition~\ref{defi_isolation_calibration}.
    We recall that there exists $\delta > 0$ (depending only on $\Omega$, $\beta$, $\gamma$) such that $\Omega \subset \subset B(0,\delta^{-1})$ and all relevant competitors of (\ref{eq_E}) have a compact support included in $B(0,\delta^{-1})$.
    We would apply the items of Lemma~\ref{lem_calibration_tools} with $X = B(0,\delta^{-1})$ and $X = B(0,\delta^{-1}) \setminus \overline{\Omega}$.

    \section{Extension of a unit normal vector field}\label{appendix_extension}

    Let $X$ be an open set of $\R^n$, let $\Omega \subset \subset X$ be an open subset of $X$ with Lipschitz boundary.
    We let $\nu$ denote the outward unit normal vector field of $\Omega$ (defined $\HH^{n-1}$-a.e.\ on $\partial \Omega$), and we define $A = X \setminus \overline{\Omega}$.

    We recall the functional definition of the "normal component on $\partial \Omega$" introduced by Anzellotti (\cite{Anzellotti}).
    Let $z \in L^\infty(A;\R^n)$ be a vector field such that $\mathrm{div}(z)$ is a finite Radon measure in $A$.
    According to~\cite[Theorem 1.2]{Anzellotti}, there exists a unique function $[z \cdot \nu] \in L^\infty(\partial \Omega)$ such that for all $\varphi \in C^1_c(X)$,
    \begin{equation}\label{eq_normal_component}
        \int_A \! \varphi \dm [\mathrm{div}(z)] - \int_A \! z \cdot \nabla \varphi \dm x = -\int_{\partial \Omega} \! [z \cdot \nu] \varphi \dm \HH^{n-1}.
    \end{equation}
    Moreover, we have $\norm{[z,\cdot \nu]}_{L^\infty(\partial \Omega)} \leq \norm{z}_{L^\infty}$.
    If $z$ was a smooth vectorfield, the function $[z \cdot \nu]$ would coincide with the scalar product $z \cdot \nu$ on $\partial \Omega$.

    In the case $\mathrm{div}(z) = 0$ in $A$, we can give another interpretation to (\ref{eq_normal_component}).
    We extend $z$ by $0$ in $\Omega$ so (\ref{eq_normal_component}) says that for all $\varphi \in C^1_c(X)$,
    \begin{equation}
        \int_X \! z \cdot \nabla \varphi \dm x = -\int_{\partial \Omega} \! [z \cdot \nu] \varphi \dm \HH^{n-1}.
    \end{equation}
    Thus, $\mathrm{div}(z)$ is a finite Radon measure in $X$ and is equal to $[z \cdot \nu] \HH^{n-1} \mres \partial \Omega$.
    As $\Omega$ has a Lipschitz boundary, $z = 0$ in $\Omega$ and $\mathrm{div}(z)$ is supported on $\partial \Omega$, one can see the pairing $(z, D \mathbf{1}_\Omega^+)$ (see \cite[Definition 3.1]{bv_supersolutions}) coincides with $-\mathrm{div}(z)$, i.e.
    \begin{equation}
        (z, D \mathbf{1}_\Omega^+) = - [z \cdot \nu] \norm{D \mathbf{1}_\Omega}.
    \end{equation}
    This means that
    \begin{equation}
        z = - [z \cdot \nu] \frac{D \mathbf{1}_\Omega}{\norm{D \mathbf{1}_\Omega}}  
    \end{equation}
    in a weak sense.

    Now, we assume that $\Omega$ is outward minimizing in $X$; for all set of finite perimeter $E \subset \subset X$ containing $\Omega$, we have $P(\Omega) \leq P(E)$.
    This property holds true for convex sets but not only.
    If the curvature of $\partial \Omega$ is positive, there exists an open neighborhood $X$ of $\Omega$ in which $\Omega$ is outward minimizing.
    According to the co-area formula, we deduce that that for all $\varphi \in C^1_c(X)$ such that $\varphi \geq 1$ on $\Omega$,
    \begin{equation}\label{eq_outward_minimizing}
        P(\Omega) \leq \int_X \! \abs{\nabla \varphi} \dm x.
    \end{equation}
    Inequality (\ref{eq_outward_minimizing}) also holds true all $u \in W^{1,1}_0(X)$ such that $u \geq 1$ on $\Omega$.
    Indeed, one can first replace $u$ by its post-composition with the orthogonal projection onto $[0,1]$.
    This makes sure that $u = 1$ and $\Omega$ and do not increase the total variation.
    Since $\Omega$ has a Lipschitz boundary, the function $u$ has trace $1$ on $\partial \Omega$ and can be approximated in $W^{1,1}(X)$ convergence by functions $\varphi \in C^1_c(X)$ such that such that $\Omega \subset \subset \set{\varphi = 1}$.

    The following lemma and its elementary proof were communicated by A. Chambolle.
    There is an alternative proof, in a more general setting, in~\cite[Appendix A]{C5}.

    \begin{prop}
        Let $X$ be a bounded open set of $\R^n$, let $\Omega \subset \subset X$ be an open subset of $X$ with Lipschitz boundary.
        We let $\nu$ denote the outward unit normal vector field of $\Omega$ (defined $\HH^{n-1}$-a.e.\ on $\partial \Omega$), and $A = X \setminus \overline{\Omega}$.
        Then there exists a vectorfield $z \in L^\infty(A;\R^n)$ such that $\norm{z}_{L^\infty} \leq 1$, $\mathrm{div}(z) = 0$ in $A$ and $[z \cdot \nu] = 1$ $\HH^{n-1}$-a.e.\ on $\Omega$, that is for all $\varphi \in C^1_c(X)$,
        \begin{equation}
            \int_A \! z \cdot \nabla \varphi \dm x = - \int_{\partial \Omega} \! \varphi \dm \HH^{n-1}.
        \end{equation}
    \end{prop}

    \begin{proof}
        Without loss of generality, we assume $\Omega \ne \emptyset$.
        Let $p$ be a real number such that $1 < p < 2$.
        We write $p'$ the positive number such that $\frac{1}{p} + \frac{1}{p'} = 1$, i.e.\ $p' = p / (p - 1)$.
        We let $\mathcal{S}_p$ denote the affine subsace
        \begin{equation}
            \set{u \in W^{1,p}_0(X) | u = 1 \ \text{on $\Omega$}}.
        \end{equation}
        As we have seen before, the elements of $\mathcal{S}_p$ can be approximated in $W^{1,p}$ convergence by functions $\varphi \in C^1_c(X)$ such that $\Omega \subset \subset \set{\varphi = 1}$.
        We can also approximate $\mathbf{1}_\Omega$ in strict $BV$ convergence (\cite[Definition 3.14]{Ambrosio}) by such functions.
        Indeed, $\Omega$ has a Lipschitz boundary so for all $\varepsilon$, there exists $\delta > 0$ such that $\Omega_\delta \subset \subset X$ and $P(\Omega_\delta) \leq P(\Omega) + \varepsilon$, where $\Omega_\delta = \set{x \in \R^n | \mathrm{dist}(x,\Omega) < \delta}$.
        We can also assume $\delta$ small enough so that $\abs{\Omega_\delta \setminus \Omega} \leq \varepsilon$.
        Then, it suffices to mollify $\mathbf{1}_{\Omega_\delta}$.

        We consider the energy
        \begin{equation}
            E_p(u) := \int_{\R^n} \! \abs{\nabla u}^p \dm x
        \end{equation}
        defined for $u \in \mathcal{S}_p$ and we let $m_p$ denote its infimum.

        We can bound $m_p$ from above independently from $p$, by considering any test function $\varphi \in C^1_c(X)$ such that $\varphi = 1$ on $\Omega$ and $\abs{\nabla \varphi} \leq C$ on $\R^n$, where $C$ depends on $\Omega$ and $X$.

        Next, we show that $\lim_{p \to 1} m_p = P(\Omega)$.
        We start with $\limsup_{p \to 1} m_p \leq P(\Omega)$.
        For all $\varphi \in C^1_c(X)$ such that $\varphi = 1$ on $\Omega$, we have
        \begin{equation}
            m_p \leq \int_{X} \! \abs{\nabla \varphi}^p \dm x
        \end{equation}
        whence
        \begin{equation}
            \limsup_{p \to 1} m_p \leq \int_{X} \! \abs{\nabla \varphi} \dm x.
        \end{equation}
        We deduce that $\limsup_{p \to 1} m_p \leq P(\Omega)$ by approximating $\mathbf{1}_\Omega$ in strict $BV$ convergence with functions $\varphi \in C^1_c(X)$ such that $\varphi = 1$ on $\Omega$.
        Reciprocally, for all $u \in \mathcal{S}_p$, the outward minimality of $\Omega$ yields
        \begin{align}
            P(\Omega) & \leq \int_{X} \! \abs{\nabla u} \dm x \\
                      & \leq \abs{X}^{1 - \frac{1}{p}} \int_{X} \! \abs{\nabla u}^p \dm x
        \end{align}
        whence $P(\Omega) \leq \abs{X}^{1 - \frac{1}{p}} m_p$ and then $P(\Omega) \leq \liminf_{p \to 1} m_p$.

        The energy $E_p$ has a unique minimizer $u_p \in \mathcal{S}_p$.
        The short and usual proof starts with the fact that for any miniming sequence $(v_k)$, the gradient sequence $(\nabla v_k)$ is a Cauchy sequence in $L^p(X)$ thanks to the uniform convexity of the $L^p$ norm.
        One can use Poincar\'e inequality to deduce that for all open ball $B \subset X$, the sequence $(v_k - m_B v_k)$ is also a Cauchy sequence in $L^p(B)$, where $m_B v_k$ is the average value of $v_k$ in $B$.
        Similarly, for all open balls $B_1, B_2 \subset X$, the sequence of real numbers $(m_{B_1} v_k - m_{B_2})$ is also a Cauchy sequence.
        So we can fix a ball $B_0 \subset \Omega$ and see that $(v_k - m_{B_0} v_k)$ converge in $L^1_{\mathrm{loc}}(X)$, but since $v_k = 1$ on $\Omega$, this just means that $v_k$ converge in $L^1_{\mathrm{loc}}$.
        The limit is a minimizer and is even unique, again by uniform convexity of the $L^p$ norm.

        Now, we state the Euler-Lagrange associated to this problem.
        For all function $v \in \mathcal{S}_p$, we can test $v_\varepsilon = u_p \pm \varepsilon (v - u)$ for all $\varepsilon > 0$ to deduce
        \begin{equation}
            \int_X \! \abs{\nabla u_p}^{p - 2} \nabla u_p \cdot \nabla (v - u) \dm x = 0,
        \end{equation}
        that is
        \begin{equation}\label{eq_euler_zp1}
            \int_X \! \abs{\nabla u_p}^{p - 2} \nabla u_p \cdot \nabla v = \int_X \! \abs{\nabla u_p}^p \dm x.
        \end{equation}
        An equivalent way to state (\ref{eq_euler_zp1}) is that for all $v \in W^{1,p}_0(X)$ such that $v = 0$ on $\Omega$, we have
        \begin{equation}\label{eq_euler_zp2}
            \int_X \! \abs{\nabla u_p}^{p - 2} \nabla u_p \cdot \nabla v \dm x = 0.
        \end{equation}

        We set $z_p = -\abs{\nabla u}^{p - 2} \nabla u \in L^{p'}(\R^n)$ and we are going to extract a subsequence of $(z_p)$ that converges to a vector field $z$ when $p \to 1$; it will be the solution to our lemma.
        We recall that $p' = p / (p - 1)$ so $p' \to +\infty$ as $p \to 1$.
        Fix any $1 < q < \infty$ and consider $p$ close enough to $1$ so that $q \leq p'$.
        Then
        \begin{align}
            \left(\fint_X \abs{z_p}^q \dm x\right)^\frac{1}{q} & \leq \left(\fint_X \abs{z_p}^{p'} \dm x\right)^\frac{1}{p'} \\
                                                               & \leq \left(\fint_X \abs{\nabla u_p}^{p} \dm x\right)^\frac{1}{p'}
        \end{align}
        whence
        \begin{equation}
            \limsup_{p \to 1} \left(\fint_X \abs{z_p}^q \dm x\right)^\frac{1}{q} \leq 1.
        \end{equation}
        We use a diagonalisation argument to extract a subsequence of $(z_p)$ that converges weakly in $L^q$ to a vector field $z$, for all $1 < q < \infty$.
        The norm $\norm{\cdot}_{L^q}$ is lower-semicontinuous with respect to weak convergence in $L^q$ (since it can be computed by duality) so for all $1 < q < \infty$,
        \begin{equation}
            \left(\fint_X \abs{z}^q \dm x\right)^\frac{1}{q} \leq 1
        \end{equation}
        which means that $\abs{z} \leq 1$ on $X$.
        Passing to the limit in (\ref{eq_euler_zp1}) and (\ref{eq_euler_zp2}), we see that for all $\varphi \in C^1_c(X)$ with $\varphi = 1$ on $\Omega$, we have
        \begin{equation}\label{eq_euler_z1}
            \int_X \! z \cdot \nabla \varphi \dm x = -P(\Omega)
        \end{equation}
        and for all $\varphi \in C^1_c(X)$ with $\varphi = 0$ on $\Omega$, we have
        \begin{equation}\label{eq_euler_z2}
            \int_X \! z \cdot \nabla \varphi \dm x = 0.
        \end{equation}
        Observe that $\abs{\nabla u_p}^{p - 2} \nabla u_p = 0$ on $\Omega$ so $z = 0$ as well on $\Omega$.
        The condition (\ref{eq_euler_z2}) implies that $\mathrm{div}(z)$ is zero on $A$.
        According to the introduction of this section, there exists a function $[z \cdot \nu] \in L^\infty(\partial \Omega)$ such that $\norm{[z \cdot \nu]}_{L^\infty(\partial \Omega)} \leq 1$ and for all $\varphi \in C^1_c(X)$,
        \begin{equation}
            \int_X \! z \cdot \nabla \varphi \dm x = -\int_{\partial \Omega} \! [z \cdot \nu] \varphi \dm \HH^{n-1}.
        \end{equation}
        Taking any $\varphi \in C^1_c(X)$ such that $\varphi = 1$ on $\Omega$ yields
        \begin{equation}
            \int_{\partial \Omega} \! [z \cdot \nu] \dm \HH^{n-1} = P(\Omega)
        \end{equation}
        but since, $[z \cdot \nu] \leq 1$ $\HH^{n-1}$-a.e.\ on $\partial \Omega$, we actually have $[z \cdot \nu] = 1$ $\HH^{n-1}$-a.e\ on $\partial \Omega$.
    \end{proof}

\end{appendices}

\end{document}